\newtheorem{proposition}{Proposition}[section]
  \newtheorem{theorem}[proposition]{Theorem}
  \newtheorem{lemma}[proposition]{Lemma}
  \newtheorem{corollary}[proposition]{Corollary}
\theoremstyle{definition}
  \newtheorem{remark}[proposition]{Remark}
\newcommand{\cst}{\ifmmode\mathrm{C}^*\else{$\mathrm{C}^*$}\fi}
\newcommand{\st}{\;\vline\;} 	% such that
\newcommand{\tens}{\otimes} 	% tensor product
\newcommand{\I}{\mathds{1}}
\newcommand{\comp}{\circ}
\newcommand{\cleq}{\preccurlyeq}
\newcommand{\id}{\mathrm{id}}
\newcommand{\is}[2]{\left\langle#1\,\vline\,#2\right\rangle}
\newcommand{\Bra}[1]{\bigl\langle#1\bigr|}
\newcommand{\Ket}[1]{\bigl|#1\bigr\rangle}
\newcommand{\qqquad}{\quad\qquad}
\newcommand{\hh}[1]{\widehat{#1}}
\newcommand{\NN}{\mathbb{N}}
\newcommand{\RR}{\mathbb{R}}
\newcommand{\CC}{\mathbb{C}}
\newcommand{\ZZ}{\mathbb{Z}}
\newcommand{\GG}{\mathbb{G}}
\newcommand{\HH}{\mathbb{H}}
\newcommand{\cH}{\mathscr{H}}
\newcommand{\cc}{\text{\rm\tiny{c}}}
\DeclareMathOperator{\B}{B}
\DeclareMathOperator{\Pol}{Pol}
\DeclareMathOperator{\C}{C}
\DeclareMathOperator{\Irr}{Irr}
\DeclareMathOperator{\Mor}{Mor}
\newcommand{\bN}{\boldsymbol{\mathsf{N}}}
\newcommand{\bh}{\boldsymbol{h}}
\newcommand{\vecRho}{\overset{{}_{\scriptscriptstyle\rightarrow}}{\uprho}}
\newcommand{\ivecRho}{\overset{{}_{\scriptscriptstyle\leftarrow}}{\uprho}}
\newcommand{\WW}{\boldsymbol{W}}
\newcommand{\tp}{\!\!\!
{\scriptstyle
\text{
\raisebox{0.8pt}{
\textcircled{\raisebox{-1.7pt}{$\top$}}
} % \raisebox
} % \text
} % \scriptstyle
\!\!\!}
\newcommand{\stp}{\!\!\!\!\!\:
{\scriptscriptstyle
\text{
\raisebox{0.5pt}{
\textcircled{\raisebox{-1.2pt}{$\top$}}
} % \raisebox
} % \text
} % \scriptstyle
\!\!\!\!\!\:}
\numberwithin{equation}{section}
\DeclareMathAlphabet{\mathpzc}{OT1}{pzc}{m}{it}
\begin{document}

\author{Jacek Krajczok}
\address{Department of Mathematical Methods in Physics, Faculty of Physics, University of Warsaw, Poland}
\email{jk347906@okwf.fuw.edu.pl} 

\author{Piotr M.~So{\l}tan}
\address{Department of Mathematical Methods in Physics, Faculty of Physics, University of Warsaw, Poland}
\email{piotr.soltan@fuw.edu.pl}

\begin{abstract}
\end{abstract}

\title[Low compact quantum groups]{Compact quantum groups with representations of bounded degree}

\keywords{Compact quantum group, representation, dimension, quantum dimension}

\subjclass[2010]{Primary: 20G42, Secondary: 22D10, 16T20}

% 20G42 Quantum groups (quantized function algebras) and their representations 
% 22D10 Unitary representations of locally compact groups
% 16T20 Ring-theoretic aspects of quantum groups

\begin{abstract}
We show that a compact quantum group all whose irreducible representations have dimension bounded by a fixed constant must be of Kac type, in other words, its Haar measure is a trace. The proof is based on establishing several facts concerning operators related to modular properties of the Haar measure. In particular we study spectrum of these operators and the dimension of some of their eigenspaces in relation to the quantum dimension of the corresponding irreducible representation.
\end{abstract}

\maketitle

\section{Introduction}

Let $\GG$ be a compact quantum group. It is known that all irreducible representations of $\GG$ are finite dimensional. We will say that \emph{$\GG$ has representations of bounded degree} if the dimensions (in algebraic literature called \emph{degrees}) of all irreducible representations of $\GG$ are bounded by some fixed constant. This property appeared recently in the paper \cite{DSV} in connection with property $(\mathrm{T})$ for discrete quantum groups, where $\GG$ with representations of bounded degree was termed \emph{low}.\footnote{Let us also mention that in a recent preprint \cite{BK} the main results of \cite{DSV} have been established without the assumption of bounded degree of representations.} The authors of \cite{DSV} remark that compact quantum groups with representations of bounded degree exist and provide some examples (\cite[Remark 1.6]{DSV}). In fact examples of such compact quantum groups have been plentiful in non-commutative geometry (see e.g.~\cite{HM} or \cite{MB}).

Classical groups with representations of bounded degree have been studied already in \cite{kaplansky}. It was proved by C.C.~Moore in \cite{moore} that such groups must bee \emph{virtually abelian}, i.e.~they have an abelian subgroup of finite index. The interest in establishing a quantum analog of this result lead first to a much more mundane question whether a compact quantum group with representations of bounded degree must necessarily be of Kac type (have tracial Haar measure). Quite surprisingly this question turned out to be rather difficult to settle. In this paper we show that indeed a compact quantum group with representations of bounded degree is of Kac type. Compact quantum groups of Kac type are characterized in many ways e.g.~in \cite[Proposition 1.7.9]{NT}, (see also \cite[Theorem 3.4]{DiscrCpt}). The task is carried out by exploiting a number of inequalities between various numerical invariants like the quantum dimension or dimensions of certain eigenspaces of operators naturally associated with representations of quantum groups which are not of Kac type.

All necessary definitions and basic theory of compact quantum groups can be found in the book \cite{NT}. We will also follow almost all notational conventions of that book. In particular we refer the reader to \cite[Chapter 1]{NT} for the definitions of
\begin{itemize}
\item contragredient representation $U^\cc$ (\cite[Definition 1.3.8]{NT}),
\item intertwiners $\Mor(U,V)$ and self-intertwiners $\operatorname{End}(U)$ (\cite[Section 1.3]{NT}),
\item direct sums and tensor products of representations (\cite[Section 1.3]{NT}),
\item conjugate representation $\overline{U}$ (\cite[Definition 1.4.5]{NT}).
\end{itemize}
For notions related to duality between compact and discrete quantum groups (and discrete quantum groups themselves), apart from \cite{NT} we also recommend \cite{vanDaele}, \cite[Section 3]{qLor} and \cite{DiscrCpt}.

The paper is organized as follows: in Section \ref{notation} we recall certain aspects of the theory of compact quantum groups and introduce some notation needed later on. Section \ref{Symmetry} is devoted to establishing sufficient conditions for operators $\uprho_\alpha$ (see Section \ref{notation} and \cite{NT}) have eigenvalues symmetric with respect to the map $t\mapsto\tfrac{1}{t}$. In particular we show that such a symmetry statement holds for all irreducible representations of a quantum group with representations of bounded degree. In Section \ref{comult} we give a formula for the values of the comultiplication $\Delta_{\hh{\GG}}$ of the discrete quantum group $\hh{\GG}$ dual to $\GG$ on elements of the standard basis of $\operatorname{c}_{00}(\hh{\GG})$ (cf.~\cite{DiscrCpt}) which we need in the following Section \ref{spectral} dealing with spectral projections of the operators $\uprho_\alpha$. Theorem \ref{eqThm} in that section is an important technical tool for establishing our main result. The longest section \ref{mainSect} focuses on the proof of our main theorem \ref{mainThm} and finally in the appendix we briefly mention an algebraic characterization of the property of having representations of bounded degree.

\section{Notation}\label{notation}

Let $\GG$ be a compact quantum group. For a finite dimensional unitary representation $U\in\B(\cH_U)\tens\C(\GG)$ we will use the symbol $\uprho_U$ for the unique positive invertible element of $\Mor(U,U^{\cc\cc})$ such that $\operatorname{Tr}(\,\cdot\,\uprho_U)=\operatorname{Tr}(\,\cdot\,{\uprho_U}^{-1})$ on $\operatorname{End}(U)$ (\cite[Proposition 1.4.4]{NT}). Define
\[
d_t(U)=(\operatorname{Tr}\tens{f_t})(\uprho_U),\qqquad{t}\in\RR,
\]
where $f_t$ is the Woronowicz character (\cite[Definition 1.7.1]{NT}). In particular $d_1(U)$ is the \emph{quantum dimension} of $U$ (\cite[Page 16]{NT}) and $d_0(U)=\dim{U}$. Note also that $d_t(\,\cdot\,)$ is additive with respect to direct sums and multiplicative with respect to tensor products of representations.

Let $\vecRho_U$ be the list of eigenvalues of $\uprho_U$ in descending order and let $\ivecRho_U$ be the list of eigenvalues of ${\uprho_U}^{-1}$ in descending order (with possible repetitions). We will treat $\vecRho_U$ and $\ivecRho_U$ as elements of $\RR^{\dim{U}}$. Then for $t\geq{1}$ we have
\[
d_t(U)=\bigl(\|\vecRho_U\|_t\bigr)^t,
\]
where $\|\cdot\|_{t}$ is the usual $\ell_t$ norm on $\RR^{\dim{U}}$. Note that one of the defining properties of $\uprho_U$ implies that
\begin{equation}\label{traceFalpha}
\|\vecRho_U\|_1=\|\ivecRho_U\|_1.
\end{equation}

We let $\Irr{\GG}$ denote the set of equivalence classes of irreducible representations of $\GG$. For each $\alpha\in\Irr{\GG}$ we fix a unitary representative $U^\alpha\in\alpha$ on a Hilbert space $\cH_\alpha$ of dimension $n_\alpha$ (i.e.~$n_\alpha=\dim{U^\alpha}$). We write $\uprho_\alpha$ for $\uprho_{U^\alpha}$ and we fix an orthonormal basis $\{\xi^\alpha_1,\dotsc,\xi^\alpha_{n_\alpha}\}$ of $\cH_\alpha$ in which the matrix of $\uprho_\alpha$ is diagonal with descending eigenvalues. Once the basis is fixed, the corresponding matrix units will be denoted by $e_{i,j}^\alpha$:
\[
e_{i,j}^\alpha=\Ket{\bigl.\xi^\alpha_i\bigr.}\Bra{\xi^\alpha_j},\qqquad{i,j}\in\{1,\dotsc,n_\alpha\}.
\]

Throughout the paper we will use the constant
\[
\bN_\GG=\sup\{n_\alpha\st\alpha\in\Irr{\GG}\}\in\NN\cup\{+\infty\}.
\]

Some objects defined above for representations depend in fact only on the equivalence class of a given representation. In particular this is the case for $d_t(U)$, $\vecRho_U$ and $\ivecRho_U$. It follows that, with slight abuse of notation, we can write e.g. $d_t(\alpha)$, $\vecRho_\alpha$ and $\ivecRho_\alpha$ instead of $d_t(U^\alpha)$, $\vecRho_{U^\alpha}$ and $\ivecRho_{U^\alpha}$ for $\alpha\in\Irr{\GG}$. Moreover we will use the shorthand $d_t(\alpha^{\stp{n}})$ for $d_t\bigl((U^\alpha)^{\stp{n}}\bigr)$.

Throughout the paper we will assume that $\GG$ is \emph{infinite}, i.e.~$\dim{\C(\GG)}=+\infty$ (equivalently $\Irr{\GG}$ is an infinite set).

\section{Symmetry of eigenvalues}\label{Symmetry}

In this section we would like to address the situation when the eigenvalues of $\uprho_\alpha$ are symmetric in the sense that
\begin{equation}\label{symmetry}
\vecRho_\alpha=\ivecRho_\alpha.
\end{equation}
This need not be the case. Indeed one can construct compact quantum groups with the operator $\uprho_\alpha$ ``prescribed'' (at least for the so called \emph{fundamental} or \emph{defining} representation). For example, the construction of free quantum unitary as well as free quantum orthogonal groups begins with the choice of an invertible matrix $F$ and the operator $\uprho_U$ for the fundamental representation is then proportional to $(F^*F)^\top$ (cf.~\cite[Example 1.4.2]{NT}). It follows that there exists compact quantum groups with irreducible representations for which the symmetry \eqref{symmetry} does not hold. 

Nevertheless there are cases where one can prove \eqref{symmetry}. We start with a simple one.

\begin{proposition}\label{simpleSym}
Let $\alpha\in\Irr{\GG}$. If the representations $U^\alpha$ and $\overline{U^\alpha}$ are equivalent then \eqref{symmetry} holds.
\end{proposition}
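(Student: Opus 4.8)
The plan is to relate the operator $\uprho_{U^\alpha}$ to $\uprho_{\overline{U^\alpha}}$ via the hypothesized equivalence $U^\alpha\sim\overline{U^\alpha}$, and then use the known fact that $\uprho$ of the conjugate representation is (up to a positive scalar and transpose, which does not affect eigenvalues) the inverse of $\uprho$ of the original representation. First I would recall from \cite[Section 1.4]{NT} that for a finite dimensional unitary representation $U$ one has a relation of the form $\uprho_{\overline{U}}=\bigl(\uprho_U^{-1}\bigr)^\top$ in a suitable identification of the carrier Hilbert space of $\overline{U}$ with $\overline{\cH_U}$ (this is exactly how $\overline{U}$ is built, together with the normalization of $\uprho$ by the condition $\operatorname{Tr}(\,\cdot\,\uprho)=\operatorname{Tr}(\,\cdot\,\uprho^{-1})$). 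In particular the list of eigenvalues of $\uprho_{\overline{U^\alpha}}$ equals the list of eigenvalues of $\uprho_{U^\alpha}^{-1}$, i.e. $\vecRho_{\overline{U^\alpha}}=\ivecRho_{U^\alpha}$.

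Next I would invoke uniqueness: for an \emph{irreducible} representation, $\uprho_U$ is the unique positive invertible element of $\Mor(U,U^{\cc\cc})$ normalized by $\operatorname{Tr}(\,\cdot\,\uprho_U)=\operatorname{Tr}(\,\cdot\,\uprho_U^{-1})$ on $\operatorname{End}(U)=\CC\I$, and such numerical invariants depend only on the equivalence class. Since $U^\alpha$ and $\overline{U^\alpha}$ are equivalent by hypothesis, their associated $\uprho$ operators are conjugate by the intertwining unitary, hence have the same spectrum with multiplicities; that is, $\vecRho_{U^\alpha}=\vecRho_{\overline{U^\alpha}}$. Combining this with the previous paragraph gives $\vecRho_\alpha=\vecRho_{U^\alpha}=\vecRho_{\overline{U^\alpha}}=\ivecRho_{U^\alpha}=\ivecRho_\alpha$, which is precisely \eqref{symmetry}.

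The only genuinely delicate point is pinning down the exact relationship between $\uprho_{\overline{U}}$ and $\uprho_U$ — specifically checking that the normalization built into \cite[Definition 1.4.5]{NT} and \cite[Proposition 1.4.4]{NT} makes $\uprho_{\overline{U}}$ equal to $(\uprho_U^{-1})^\top$ on the nose rather than off by a scalar; but since both $\uprho_{\overline{U}}$ and $(\uprho_U^{-1})^\top$ are positive invertible self-intertwiners of $\overline{U}$ (using $\overline{U}^{\cc\cc}\cong\overline{U}$) satisfying the same trace-symmetry normalization, irreducibility forces them to coincide, and in any case only the eigenvalue lists matter here. I expect this bookkeeping with the conventions of \cite{NT} to be the main (and essentially only) obstacle; everything else is formal manipulation of spectra under unitary conjugation and the observation that transposition preserves eigenvalues.
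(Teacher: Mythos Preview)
Your proposal is correct and follows essentially the same approach as the paper: the paper's proof is a one-line appeal to \cite[Proposition 1.4.7]{NT} for the identity $\uprho_{\overline{U^\alpha}}=(\uprho_\alpha^{-1})^\top$, followed implicitly by the observation that equivalence of $U^\alpha$ and $\overline{U^\alpha}$ forces their $\uprho$-operators to have the same spectrum. Your extra care about the normalization scalar is harmless but unnecessary once that proposition is cited.
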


\begin{proof}
This is an immediate consequence of \cite[Proposition 1.4.7]{NT} which says that $\uprho_{\overline{U^\alpha}}$ is the transpose of ${\uprho_\alpha}^{-1}$. 
\end{proof}

Proposition \eqref{simpleSym} shows for example that for $\GG=\mathrm{SU}_q(2)$ we  have \eqref{symmetry} for all $\alpha\in\Irr{\GG}$.

Our second proposition describes another situation where \eqref{symmetry} holds. To help formulate the statement, for each $\alpha\in\Irr{\GG}$ let $P_n(\alpha)$ be the maximal dimension of an irreducible subrepresentation of $\alpha^{\stp{n}}$.

\begin{proposition}\label{orderF}
Let $\alpha\in\Irr{\GG}$ and  assume the following condition is satisfied:
\begin{equation}\label{growth}
\lim_{n\to\infty}\tfrac{P_n(\alpha)}{c^n}=0,\qqquad{c}>1.
\end{equation}
Then $\vecRho_\alpha=\ivecRho_\alpha$.
\end{proposition}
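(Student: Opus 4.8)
The plan is to show that the multiset of eigenvalues of $\uprho_\alpha$ (with multiplicities) coincides with the multiset of eigenvalues of ${\uprho_\alpha}^{-1}$. Equivalently, writing the eigenvalues of $\uprho_\alpha$ as $\lambda_1\geq\dotsb\geq\lambda_{n_\alpha}>0$, we want the map $\lambda\mapsto\tfrac1\lambda$ to be a bijection of this multiset onto itself. The natural quantity to track is, for $t\geq1$,
\[
d_t(\alpha)=\sum_{i=1}^{n_\alpha}{\lambda_i}^t=\bigl(\|\vecRho_\alpha\|_t\bigr)^t,
\qqquad
\sum_{i=1}^{n_\alpha}{\lambda_i}^{-t}=\bigl(\|\ivecRho_\alpha\|_t\bigr)^t,
\]
and the symmetry \eqref{symmetry} is equivalent to $\|\vecRho_\alpha\|_t=\|\ivecRho_\alpha\|_t$ for all $t\geq1$ (this determines a finite multiset of positive reals up to the inversion symmetry). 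So the goal reduces to controlling the growth in $n$ of $d_t(\alpha^{\stp{n}})=\bigl(d_t(\alpha)\bigr)^n$ versus the analogous quantity built from ${\uprho_\alpha}^{-1}$, i.e.\ $d_t(\overline{U^\alpha}^{\stp n})=\bigl(\|\ivecRho_\alpha\|_t\bigr)^{tn}$ by \cite[Proposition 1.4.7]{NT} and multiplicativity of $d_t$.

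First I would use the hypothesis \eqref{growth} to bound the largest eigenvalue of $\uprho_{\alpha^{\stp n}}$. The key observation is that $\uprho$ is multiplicative on tensor products, so the largest eigenvalue of $\uprho_{\alpha^{\stp n}}$ is ${\lambda_1}^n$ and the smallest is ${\lambda_{n_\alpha}}^n$; on the other hand $\uprho_{\alpha^{\stp n}}$ restricted to each irreducible summand $\beta\subseteq\alpha^{\stp n}$ equals $\uprho_\beta$, whose largest eigenvalue is at least $1$ (since $\operatorname{Tr}\uprho_\beta=\operatorname{Tr}{\uprho_\beta}^{-1}$ forces $\|\vecRho_\beta\|_\infty\geq1$) and in fact is at least $d_1(\beta)/n_\beta\geq \sqrt{n_\beta}/n_\beta$, while also the quantum dimension satisfies $d_1(\beta)\leq n_\beta\,\|\vecRho_\beta\|_\infty\leq n_\beta\,{\lambda_1}^n$. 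Feeding $n_\beta\leq P_n(\alpha)$ and the sub-exponential growth \eqref{growth} into an inequality of this shape should force ${\lambda_1}^n$ — more precisely the ratio ${\lambda_1}^n/({\lambda_{n_\alpha}}^n)^{-1}={(\lambda_1\lambda_{n_\alpha})}^n$ or a similar combination — to grow no faster than any exponential base $c>1$, which is only possible if the relevant quantities are equal, i.e.\ $\lambda_1=1/\lambda_{n_\alpha}$.

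Having matched the extreme eigenvalues, I would peel them off and induct. Concretely, once $\lambda_1={\lambda_{n_\alpha}}^{-1}$ is established, consider the representation $\alpha\oplus\overline{\alpha}$ or work directly with the spectral decomposition: the eigenspace of $\uprho_\alpha$ for $\lambda_1$ and the eigenspace of ${\uprho_\alpha}^{-1}$ for $\lambda_1$ (equivalently the $\lambda_{n_\alpha}$-eigenspace of $\uprho_\alpha$) must then have the same dimension, by re-running the growth estimate with the top eigenvalue removed, or by comparing $d_t(\alpha)-{\lambda_1}^t\cdot(\text{multiplicity})$ asymptotics. Iterating inward pairs up all eigenvalues with their reciprocals and yields \eqref{symmetry}.

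The main obstacle I anticipate is the second step: extracting from the single sub-exponential bound \eqref{growth} a clean enough inequality to pin down the extreme eigenvalues exactly, rather than just approximately. The difficulty is that $P_n(\alpha)$ controls only the \emph{dimension} $n_\beta$ of summands, not their quantum dimension $d_1(\beta)$ directly, so one must interpose the universal bounds $\sqrt{n_\beta}\leq d_1(\beta)\leq n_\beta\|\vecRho_\beta\|_\infty$ and be careful that the constants do not accumulate over the $n$-fold tensor power. I would expect the argument to hinge on choosing, for each $n$, an irreducible summand of $\alpha^{\stp n}$ realizing the largest $\uprho$-eigenvalue close to ${\lambda_1}^n$, bounding its dimension by $P_n(\alpha)$, and concluding that $({\lambda_1}\lambda_{n_\alpha})^n$ cannot outgrow $P_n(\alpha)^2$ (or a similar polynomial-in-$P_n$ expression); combined with \eqref{growth} this forces $\lambda_1\lambda_{n_\alpha}\leq1$, and the reverse inequality follows symmetrically by applying the same reasoning to $\overline{U^\alpha}$, whose associated operator is the transpose of ${\uprho_\alpha}^{-1}$ and which satisfies the same hypothesis \eqref{growth} since $\overline{\alpha}^{\stp n}$ has the same dimensions of irreducible summands as $\alpha^{\stp n}$.
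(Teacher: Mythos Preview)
Your first step---showing $\lambda_1\lambda_{n_\alpha}=1$ by picking an irreducible summand $\beta\cleq\alpha^{\stp n}$ whose $\uprho_\beta$ has top eigenvalue $\lambda_1^n$, and then sandwiching $d_1(\beta)$ between $\lambda_1^n$ and $P_n(\alpha)\lambda_{n_\alpha}^{-n}$---does work. The gap is in the ``peel off and induct'' step. You propose either ``re-running the growth estimate with the top eigenvalue removed'' or ``comparing $d_t(\alpha)-\lambda_1^t\cdot(\text{multiplicity})$ asymptotics,'' but neither is available: you cannot excise an eigenspace from $\alpha$ and still have a representation to which the hypothesis applies, and the asymptotic comparison presupposes equality of the top multiplicities, which is exactly what you are trying to prove. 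Once you move inward from the extreme eigenvalues, the tensor-power eigenvalue $\lambda_1^{n-1}\lambda_2$ need not sit in its own irreducible summand separate from the one carrying $\lambda_1^n$, so there is no clean way to isolate it and repeat the argument.

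The paper avoids this entirely by never isolating individual eigenvalues. Instead it works with $d_t$ for all $t>1$ simultaneously and exploits the identity $\|\vecRho_\beta\|_1=\|\ivecRho_\beta\|_1$ \emph{on each irreducible summand} $\beta$ of $\alpha^{\stp n}$, not just on $\alpha$. Chaining the $\ell_p$-norm inequalities $\|x\|_t\leq\|x\|_1\leq n^{1-1/t}\|x\|_t$ through this identity gives
\[
d_t(\beta)=\bigl(\|\vecRho_\beta\|_t\bigr)^t\leq\bigl(\|\vecRho_\beta\|_1\bigr)^t=\bigl(\|\ivecRho_\beta\|_1\bigr)^t\leq n_\beta^{\,t-1}\bigl(\|\ivecRho_\beta\|_t\bigr)^t\leq P_n(\alpha)^{t-1}d_{-t}(\beta),
\]
and summing over all summands yields $d_t(\alpha)^n\leq P_n(\alpha)^{t-1}d_{-t}(\alpha)^n$; the reverse inequality is symmetric. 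The sub-exponential bound \eqref{growth} then forces $d_t(\alpha)=d_{-t}(\alpha)$ for every $t>1$, and an elementary lemma (equal power sums of positive reals for all exponents $t>1$ determine the multiset) gives $\vecRho_\alpha=\ivecRho_\alpha$ in one stroke. The point you are missing is that the trace identity \eqref{traceFalpha} holds for \emph{every} irreducible, so it can be invoked summand-by-summand inside $\alpha^{\stp n}$; this is what converts the dimension bound $n_\beta\leq P_n(\alpha)$ into a uniform comparison of $d_t$ with $d_{-t}$ without any peeling.
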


In the proof we will use standard inequalities for $\ell_p$ norms on $\RR^n$:
\[
\|x\|_{p'}\leq\|x\|_p\leq{n^{\frac{1}{p}-\frac{1}{p'}}}\|x\|_{p'},\qqquad{x}\in\RR^n,\:1\leq{p}\leq{p'}<+\infty
\]
and the following elementary lemma:

\begin{lemma}\label{an1}
Let $a_1\geq{a_2}\geq\dotsm\geq{a_n}>0$ and $b_1\geq{b_2}\geq\dotsm\geq{b_m}>0$ be such that
\[
\sum_{i=1}^na_i^t=\sum_{j=1}^mb_j^t,\qqquad{t}>1.
\]
Then $n=m$ and $a_i=b_i$ for $i\in\{1,\dotsc,n\}$.
\end{lemma}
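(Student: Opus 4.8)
The plan is to exploit the fact that the equality $\sum_i a_i^t = \sum_j b_j^t$ is assumed for \emph{all} $t>1$, not just a single one — this rigidity is what forces the two lists to coincide. First I would consider the function $t\mapsto \sum_i a_i^t - \sum_j b_j^t$ on $(1,\infty)$ and divide through by $a_1^t$ (assuming without loss that $a_1\geq b_1$), so that the dominant term tends to a nonzero constant while all strictly smaller terms $(a_i/a_1)^t$ and $(b_j/a_1)^t$ decay to $0$. Grouping the $a_i$'s equal to $a_1$ and the $b_j$'s equal to $a_1$, letting $t\to\infty$ forces the multiplicity of $a_1$ among the $a_i$ to equal the multiplicity of $b_1=a_1$ among the $b_j$; in particular $b_1=a_1$, since if $b_1<a_1$ the right-hand side would contribute nothing in the limit.

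Having matched the largest value together with its multiplicity, I would cancel those equal terms from both sides and induct: the remaining lists $a_{k+1}\geq\dotsm\geq a_n$ and $b_{k+1}\geq\dotsm\geq b_m$ again satisfy $\sum a_i^t=\sum b_j^t$ for all $t>1$, with strictly fewer entries, so by induction on $n+m$ (the base case being both lists empty, or a single pair of equal entries) they coincide. This yields $n=m$ and $a_i=b_i$ for all $i$.

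An alternative, perhaps cleaner, route avoids the limit argument: fix any $t_0>1$ and write the common value as $S=\sum_i a_i^{t_0}=\sum_j b_j^{t_0}$. Define probability weights $p_i=a_i^{t_0}/S$ and $q_j=b_j^{t_0}/S$; the hypothesis for exponents $st_0$ with $s>1$ says $\sum_i p_i^{\,?}$— actually more directly, setting $u_i=a_i^{t_0}$, $v_j=b_j^{t_0}$, the hypothesis becomes $\sum_i u_i^s=\sum_j v_j^s$ for all $s$ in an interval, and two positive measures $\sum_i\delta_{u_i}$ and $\sum_j\delta_{v_j}$ with equal $s$-th moments for all $s$ in an open interval must be equal (their moment generating functions agree on an open set, hence everywhere). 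Either packaging works; I expect to present the elementary $t\to\infty$ version since it needs no outside input.

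The main obstacle is purely bookkeeping: handling repetitions correctly when peeling off the top value, and making sure the induction is set up so that the inductive hypothesis applies to the truncated lists (which indeed still satisfy the same hypothesis for all $t>1$). There is no genuine analytic difficulty once one divides by the largest power and lets $t\to\infty$; the strict inequalities $a_i/a_1<1$ for the non-maximal terms do all the work.
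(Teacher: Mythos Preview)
Your argument is correct: dividing by $a_1^t$, sending $t\to\infty$, and peeling off the top value with its multiplicity, then inducting on the total length, is a clean and complete proof. The paper itself does not supply a proof of this lemma at all---it is introduced as ``the following elementary lemma'' and left unproven---so there is no approach to compare against; your write-up would in fact fill a gap the authors chose to leave to the reader.
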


\begin{proof}[Proof of Proposition \ref{orderF}]
Let $n\in\NN$ and $\beta_1,\dotsc,\beta_{N}\in\Irr{\GG}$ be such that
\[
\alpha^{\stp{n}}\cong\bigoplus_{i=1}^N\beta_i
\]
(we are not assuming the $\beta_i$'s are pairwise non-equivalent). For each $i$ and $t>1$ we have
\[
\begin{split}
d_t(\beta_i)&=\bigl(\|\vecRho_{\beta_i}\|_t\bigr)^t\leq\bigl(\|\vecRho_{\beta_i}\|_1\bigr)^t=\bigl(\|\ivecRho_{\beta_i}\|_1\bigr)^t\\
&\leq\bigl(n_{\beta_i}^{1-\frac{1}{t}}\|\ivecRho_{\beta_i}\|_t\bigr)^t=n_{\beta_i}^{t-1}\bigl(\|\ivecRho_{\beta_i}\|_t\bigr)^t\leq{P_n(\alpha)^{t-1}}d_{-t}(\beta_i),
\end{split}
\]
where in the third step we used \eqref{traceFalpha}.

Similarly for any $i$ and $t>1$
\[
\begin{split}
d_{-t}(\beta_i)&=\bigl(\|\ivecRho_{\beta_i}\|_t\bigr)^t\leq\bigl(\|\ivecRho_{\beta_i}\|_1\bigr)^t=
\bigl(\|\vecRho_{\beta_i}\|_1\bigr)^t\\
&\leq\bigl(n_{\beta_i}^{1-\frac{1}{t}}\|\vecRho_{\beta_i}\|_t\bigr)^t=n_{\beta_i}^{t-1}\bigl(\|\vecRho_{\beta_i}\|_t\bigr)^t\leq{P_n(\alpha)}^{t-1}d_t(\beta_i).
\end{split}
\]
Summing these inequalities over $i$ yields
\[
\begin{split}
d_t(\alpha)^n&=d_t\bigl(\alpha^{\stp{n}}\bigr)\leq{P_n(\alpha)^{t-1}}d_{-t}(\alpha)^n,\\
d_{-t}(\alpha)^n&=d_{-t}\bigl(\alpha^{\stp{n}}\bigr)\leq{P_n(\alpha)^{t-1}}d_t(\alpha)^n.
\end{split}
\]
Thus
\[
\begin{split}
1\leq&\left(\frac{P_\alpha(n)}{\bigl(d_t(\alpha)/d_{-t}(\alpha)\bigr)^{\frac{n}{t-1}}}\right)^{t-1},\\
1\leq&\left(\frac{P_\alpha(n)}{\bigl(d_{-t}(\alpha)/d_t(\alpha)\bigr)^{\frac{n}{t-1}}}\right)^{t-1}
\end{split}
\]
for all $t>1$.

Now, if $d_t(\alpha)\neq{d_{-t}(\alpha)}$ for some $t>1$, then either $\bigl(d_t(\alpha)/d_{-t}(\alpha)\bigr)^\frac{1}{t-1}$ or $\bigl(d_{-t}(\alpha)/d_t(\alpha)\bigr)^\frac{1}{t-1}$ is strictly greater then $1$. Setting $c$ to be this number we get
\[
1\leq\left(\frac{P_n(\alpha)}{c^n}\right)^{t-1}.
\]
Now taking limit $n\to\infty$ gives a contradiction in the form $1\leq{0}$. It follows that we must have $d_t(\alpha)=d_{-t}(\alpha)$ for all $t>1$ and hence $\vecRho_\alpha=\ivecRho_\alpha$ by Lemma \ref{an1}.
\end{proof}

\begin{corollary}\label{symcor}
If $\bN_\GG<+\infty$ than \eqref{symmetry} holds for all $\alpha\in\Irr{\GG}$.
\end{corollary}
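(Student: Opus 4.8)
The plan is to deduce this directly from Proposition \ref{orderF} by verifying its hypothesis \eqref{growth} for every $\alpha \in \Irr{\GG}$ under the assumption $\bN_\GG < +\infty$. The key point is that the boundedness of all dimensions of irreducible representations forces the quantities $P_n(\alpha)$ to be uniformly bounded.

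First I would observe that any irreducible subrepresentation of $\alpha^{\stp{n}}$ is, in particular, an irreducible representation of $\GG$, hence its dimension is at most $\bN_\GG$. Therefore $P_n(\alpha) \leq \bN_\GG$ for every $n \in \NN$, so the sequence $\bigl(P_n(\alpha)\bigr)_{n\in\NN}$ is bounded. Consequently, for any $c > 1$ we have
\[
0 \leq \lim_{n\to\infty}\frac{P_n(\alpha)}{c^n} \leq \lim_{n\to\infty}\frac{\bN_\GG}{c^n} = 0,
\]
so condition \eqref{growth} holds.

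Finally, applying Proposition \ref{orderF} to $\alpha$ yields $\vecRho_\alpha = \ivecRho_\alpha$, which is exactly \eqref{symmetry}. Since $\alpha \in \Irr{\GG}$ was arbitrary, the conclusion holds for all irreducible representations.

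There is essentially no obstacle here: the whole content is packaged into Proposition \ref{orderF}, and the only thing to check is the trivial bound $P_n(\alpha) \leq \bN_\GG$. If anything, the only point worth stating carefully is that a subrepresentation of $\alpha^{\stp{n}}$ is still a genuine representation of $\GG$, so the constant $\bN_\GG$ indeed controls it; this is immediate from the definitions recalled in Section \ref{notation}.
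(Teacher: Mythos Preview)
Your proof is correct and follows essentially the same approach as the paper: both simply note that $P_n(\alpha)\leq\bN_\GG$ for all $n$, so that condition \eqref{growth} is trivially satisfied, and then invoke Proposition \ref{orderF}. The paper just states this in one line, while you have written it out in slightly more detail.
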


\begin{proof}
Obviously we have $P_n(\alpha)\leq\bN_\GG$ for all $\alpha$ and all $n$, so if $\bN_\GG<+\infty$ the condition \eqref{growth} clearly holds for all $\alpha\in\Irr{\GG}$. 
\end{proof}

\section{Comultiplication on $\operatorname{c}_0(\hh{\GG})$}\label{comult}

The dual quantum group of $\GG$ is described in detail in \cite[Section 3]{qLor} and we refer to this paper for all the details. Let us only mention that $\operatorname{c}_0(\hh{\GG})$ is by definition the \cst-algebra $\bigoplus\limits_{\alpha\in\Irr{\GG}}M_{n_\alpha}(\CC)$, so that
\begin{equation}\label{baza0}
\bigl\{e^\alpha_{i,j}{\st\alpha\in\Irr{\GG},\:i,j\in\{1,\dotsc,n_\alpha\}	\bigr\}}
\end{equation}
spans a dense $*$-subalgebra in $\operatorname{c}_0(\hh{\GG})$ which we will denote $\operatorname{c}_{00}(\hh{\GG})$. The comultiplication $\Delta_{\hh{\GG}}$ is a morphism of \cst-algebras (in the sense of \cite[Section 0]{unbo}) from $\operatorname{c}_0(\hh{\GG})$ to $\operatorname{c}_0(\hh{\GG})\tens\operatorname{c}_0(\hh{\GG})$ defined uniquely by the requirement that
\begin{equation}\label{WW}
(\Delta_{\hh{\GG}}\tens\id)\WW=\WW_{23}\WW_{13},
\end{equation}
where
\[
\WW=\bigoplus_{\alpha\in\Irr{\GG}}U^\alpha
\]
is the \emph{universal bicharacter} describing the duality between $\GG$ and $\hh{\GG}$ (see \cite[Section 3]{qLor}). In this section we will derive an explicit formula for the value of $\Delta_{\hh{\GG}}$ on elements of the basis \eqref{baza0}. For this we need to fix certain operators related to decomposition of tensor product of irreducible representations into irreducible representations.

\subsection*{Decompositions of tensor products}

For $\beta,\gamma\in\Irr{\GG}$ the tensor product $U^\beta\tp{U^\gamma}$ is equivalent to a direct sum of $U^{\alpha_1},\dotsc,U^{\alpha_n}$ with $\alpha_1,\dotsc,\alpha_n\in\Irr{\GG}$ determined uniquely up to permutation. Given $\alpha\in\Irr{\GG}$ we let $m(\alpha,\beta\tp\gamma)$ be the multiplicity of $\alpha$ in $\beta\tp\gamma$, i.e.~the number of times $\alpha$ appears in the sequence $(\alpha_1,\dotsc,\alpha_n)$ (this can be zero). Thus we have
\begin{equation}\label{gba}
U^\beta\tp{U^\gamma}\approx
\bigoplus_{\alpha\in\Irr{\GG}}\!\!\!\!\bigoplus_{i=1}^{m(\alpha,\beta\stp\gamma)}\!\!U^\alpha.
\end{equation}
Let
\[
V(\beta,\gamma):\bigoplus\limits_{\alpha\in\Irr{\GG}}\!\!\!\!\bigoplus\limits_{i=1}^{m(\alpha,\beta\stp\gamma)}\!\!\cH_\alpha\longrightarrow\cH_\beta\tens\cH_\gamma
\]
be the unitary operator implementing equivalence \eqref{gba}. Then
\[
V(\beta,\gamma)=\sum_{\alpha\in\Irr{\GG}}\!\!\!\!\sum_{i=1}^{m(\alpha,\beta\stp\gamma)}\!\!V(\alpha,\beta\tp\gamma,i)
\]
where $V(\alpha,\beta\tp\gamma,i):\cH_\alpha\to\cH_\beta\tens\cH_\gamma$ are isometries with orthogonal ranges.

The equivalence \eqref{gba} reads
\begin{equation}\label{UU}
\begin{split}
U^\beta\tp{U^\gamma}&=\bigl(V(\beta,\gamma)\tens\I\bigr)
\biggl(\bigoplus_{\alpha\in\Irr{\GG}}\!\!\!\!\bigoplus_{i=1}^{m(\alpha,\beta\stp\gamma)}\!\!U^\alpha\biggr)
\bigl(V(\beta,\gamma)^*\tens\I\bigr)\\
&=\sum_{\alpha\in\Irr{\GG}}\!\!\!\!\sum_{i=1}^{m(\alpha,\beta\stp\gamma)}\!\!
\bigl(V(\alpha,\beta\tp\gamma,i)\tens\I\bigr)U^\alpha\bigl(V(\alpha,\beta\tp\gamma,i)^*\tens\I\bigr).
\end{split}
\end{equation}

Let us write out the matrix elements of $V(\alpha,\beta\tp\gamma,i)$ in the fixed bases of the respective Hilbert spaces:
\[
V(\alpha,\beta\tp\gamma,i)\xi^\alpha_a=\sum_{b,c}V(\alpha,\beta\tp\gamma,i)^{b,c}_a\xi^\beta_b\tens\xi^\gamma_c,\qqquad{a}\in
\{1,\dotsc,n_\alpha\}
\]
(the range of indices $b$ and $c$ are self-explanatory) or in other words
\[
V(\alpha,\beta\tp\gamma,i)=\sum_{a,b,c}V(\alpha,\beta\tp\gamma,i)^{b,c}_a\Bigl(\Ket{\xi^\beta_b}\tens\Ket{\xi^\gamma_c}\Bigr)
\Bra{\xi^\alpha_a}.
\]
The coefficients $V(\alpha,\beta\tp\gamma,i)^{b,c}_a$ help express the product of matrix elements of two irreducible representations from the collection $(U^\alpha)_{\alpha\in\Irr{\GG}}$ as a linear combination of these matrix elements:

\begin{lemma}\label{ubug}
For any $\beta,\gamma\in\Irr{\GG}$ and any $b,b'\in\{1,\dotsc,n_\beta\}$, $c,c'\in\{1,\dotsc,n_\gamma\}$ we have
\begin{equation}\label{uu}
u^\beta_{b,b'}u^\gamma_{c,c'}=
\sum_{\alpha\in\Irr{\GG}}\!\!\!\!\sum_{i=1}^{m(\alpha,\beta\stp\gamma)}\!\!\sum_{a,a'}
V(\alpha,\beta\tp\gamma,i)^{b,c}_a\,u^\alpha_{a,a'}\,\overline{V(\alpha,\beta\tp\gamma,i)^{b',c'}_{a'}}.
\end{equation}
\end{lemma}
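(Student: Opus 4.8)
The plan is to obtain \eqref{uu} by writing the matrix coefficients of both sides of the equivalence \eqref{UU} and comparing the $(b,c),(b',c')$ entry. Recall that for a representation $U\in\B(\cH)\tens\C(\GG)$ and an orthonormal basis $\{\eta_k\}$ of $\cH$, the matrix coefficients are $u_{k,l}=(\Bra{\eta_k}\tens\id)U(\Ket{\eta_l}\tens\I)$, and that the matrix coefficient of a tensor product $U^\beta\tp U^\gamma$ with respect to the product basis $\{\xi^\beta_b\tens\xi^\gamma_c\}$ is exactly the product $u^\beta_{b,b'}u^\gamma_{c,c'}$; this is just the definition of $\tp$ (the leg numbering places $U^\beta$ on the first Hilbert-space leg, $U^\gamma$ on the second, and both on the common $\C(\GG)$ leg). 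So the left-hand side of \eqref{uu} is the $\bigl((b,c),(b',c')\bigr)$ matrix coefficient of $U^\beta\tp U^\gamma$.

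Next I would compute the same matrix coefficient from the right-hand side of \eqref{UU}. Sandwiching $\sum_{\alpha,i}\bigl(V(\alpha,\beta\tp\gamma,i)\tens\I\bigr)U^\alpha\bigl(V(\alpha,\beta\tp\gamma,i)^*\tens\I\bigr)$ between $\Bra{\xi^\beta_b}\tens\Bra{\xi^\gamma_c}$ on the left (tensored with $\id$ on the $\C(\GG)$ leg) and $\Ket{\xi^\beta_{b'}}\tens\Ket{\xi^\gamma_{c'}}$ on the right, the operator $V(\alpha,\beta\tp\gamma,i)^*\tens\I$ turns $\Ket{\xi^\beta_{b'}}\tens\Ket{\xi^\gamma_{c'}}$ into $\sum_{a'}\overline{V(\alpha,\beta\tp\gamma,i)^{b',c'}_{a'}}\Ket{\xi^\alpha_{a'}}$, and dually $\Bra{\xi^\beta_b}\tens\Bra{\xi^\gamma_c}$ composed with $V(\alpha,\beta\tp\gamma,i)\tens\I$ becomes $\sum_a V(\alpha,\beta\tp\gamma,i)^{b,c}_a\Bra{\xi^\alpha_a}$. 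What remains in the middle is $(\Bra{\xi^\alpha_a}\tens\id)U^\alpha(\Ket{\xi^\alpha_{a'}}\tens\I)=u^\alpha_{a,a'}$. Collecting the sums over $\alpha$, $i$, $a$, $a'$ gives precisely the right-hand side of \eqref{uu}.

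The only slightly delicate point is the convergence/boundedness of the sum over $\alpha\in\Irr{\GG}$: a priori it is an infinite sum, so one should note that it is in fact locally finite, or rather that equality \eqref{UU} holds strictly (the sum converges in the strict topology of $\B(\cH_\beta\tens\cH_\gamma)\tens\C(\GG)$, or one may simply observe that after cutting down by the finite-rank projections coming from $V(\beta,\gamma)$ only finitely many $\alpha$ with $m(\alpha,\beta\tp\gamma)>0$ contribute, since $\dim(\cH_\beta\tens\cH_\gamma)<\infty$). Thus the sum on the right of \eqref{uu} has only finitely many nonzero terms and the manipulation above is legitimate. I do not expect any real obstacle here; the statement is essentially a bookkeeping identity repackaging \eqref{UU} in coordinates, and the main thing to be careful about is keeping the leg-numbering and the placement of complex conjugates consistent with the conventions fixed for $V(\alpha,\beta\tp\gamma,i)$ and for matrix coefficients.
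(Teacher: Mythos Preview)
Your proposal is correct and follows essentially the same route as the paper: both arguments amount to reading off the $\bigl((b,c),(b',c')\bigr)$ matrix coefficient of each side of \eqref{UU}, the paper by expanding everything in matrix units and comparing, you by sandwiching with bra--ket vectors. Your remark on finiteness of the sum over $\alpha$ is also correct and implicit in the paper's treatment.
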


\begin{proof}
Let us expand the right hand side of formula \eqref{UU}:
{\allowdisplaybreaks
\begin{align*}
U^\beta\tp{U^\gamma}
&=\sum_{\alpha\in\Irr{\GG}}\!\!\!\!\sum_{i=1}^{m(\alpha,\beta\stp\gamma)}\!\!
\bigl(V(\alpha,\beta\tp\gamma,i)\tens\I\bigr)U^\alpha\bigl(V(\alpha\beta\tp\gamma,i)^*\tens\I\bigr)\\
&=\sum_{\alpha\in\Irr{\GG}}\!\!\!\!\sum_{i=1}^{m(\alpha,\beta\stp\gamma)}\!\!
\sum_{a,b,c}
\Bigl(V(\alpha,\beta\tp\gamma,i)^{b,c}_a\bigl(\Ket{\xi^\beta_b}\tens\Ket{\xi^\gamma_c}\bigr)\Bra{\xi^\alpha_a}\tens\I\Bigr)\\
&\qquad\biggl(\sum_{k,l}\Ket{\xi^\alpha_k}\Bra{\xi^\alpha_l}\tens{u^\alpha_{k,l}}\biggr)
\sum_{a',b',c'}
\Bigl(\,\overline{V(\alpha,\beta\tp\gamma,i)^{b',c'}_{a'}}\,\Ket{\xi^\alpha_{a'}}\bigl(\Bra{\xi^\beta_{b'}}\tens\Bra{\xi^\gamma_{c'}}\bigr)\tens\I\Bigr)\\
&=\sum_{\alpha\in\Irr{\GG}}\!\!\!\!\sum_{i=1}^{m(\alpha,\beta\stp\gamma)}\\
&\qquad\sum_{a,a',b,b',c,c'}\!\!\!\!
V(\alpha,\beta\tp\gamma,i)^{b,c}_a
\Bigl(\Ket{\xi^\beta_b}\Bra{\xi^\beta_{b'}}\tens\Ket{\xi^\gamma_c}\Bra{\xi^\gamma_{c'}}\Bigr)
\overline{V(\alpha,\beta\tp\gamma,i)^{b',c'}_{a'}}\tens{u^\alpha_{a,a'}}\\
&=\sum_{b,b',c,c'}\Bigl(\Ket{\xi^\beta_b}\Bra{\xi^\beta_{b'}}\tens\Ket{\xi^\gamma_c}\Bra{\xi^\gamma_{c'}}\Bigr)\\
&\qquad\tens\biggl(
\sum_{\alpha\in\Irr{\GG}}\!\!\!\!\sum_{i=1}^{m(\alpha,\beta\stp\gamma)}\!\!\sum_{a,a'}
V(\alpha,\beta\tp\gamma,i)^{b,c}_a\,u^\alpha_{a,a'}\,\overline{V(\alpha,\beta\tp\gamma,i)^{b',c'}_{a'}}
\biggr).
\end{align*}}

On the other hand, the left hand side of \eqref{UU} is
\[
U^\beta\tp{U^\gamma}={U^\beta}_{13}{U^\gamma}_{23}
=\sum_{b,b',c,c'}\Bigl(\Ket{\xi^\beta_b}\Bra{\xi^\beta_{b'}}\tens\Ket{\xi^\gamma_c}\Bra{\xi^\gamma_{c'}}\Bigr)\tens{u^\beta_{b,b'}}u^\gamma_{c,c'}.
\]
and \eqref{uu} follows.
\end{proof}

\subsection{Formula for $\Delta_{\hh{\GG}}$}

The proof of the next proposition owes a lot to techniques used in \cite{FLS-Fourier}.

\begin{proposition}\label{deltahat}
For any $\alpha\in\Irr{\GG}$ and any $a,a'\in\{1,\dotsc,n_\alpha\}$ we have
\[
\Delta_{\hh{\GG}}(e^\alpha_{a,a'})=
\sum_{\beta,\gamma\in\Irr{\GG}}\!\!\!\!\!
\sum_{i=1}^{m(\alpha,\beta\stp\gamma)}\!\!\!
\sum_{b,b',c,c'}
V(\alpha,\beta\tp\gamma,i)^{b,c}_a\bigl(e^\gamma_{c,c'}\tens{e^\beta_{b,b'}}\bigr)\overline{V(\alpha,\beta\tp\gamma,i)^{b',c'}_{a'}}.
\]
\end{proposition}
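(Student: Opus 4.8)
The plan is to derive the formula for $\Delta_{\hh{\GG}}$ directly from the defining relation \eqref{WW}, using the pairing between $\operatorname{c}_0(\hh{\GG})$ and $\C(\GG)$. Recall that the universal bicharacter $\WW=\bigoplus_\alpha U^\alpha$ has, in the leg corresponding to $M_{n_\alpha}(\CC)$, the matrix element sitting at position $(k,l)$ equal to $e^\alpha_{k,l}\tens u^\alpha_{k,l}$ (after identifying $\B(\cH_\alpha)$ with $M_{n_\alpha}(\CC)\subseteq\operatorname{c}_0(\hh{\GG})$), so that $\WW=\sum_\alpha\sum_{k,l}e^\alpha_{k,l}\tens u^\alpha_{k,l}$. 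The key point is that the relation $(\Delta_{\hh{\GG}}\tens\id)\WW=\WW_{23}\WW_{13}$ completely determines $\Delta_{\hh{\GG}}(e^\alpha_{a,a'})$ once we know how the right-hand side decomposes in terms of the basis \eqref{baza0} in the first two legs, because the matrix elements $\{u^\alpha_{a,a'}\}$ are linearly independent in $\C(\GG)$.

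First I would compute the right-hand side $\WW_{23}\WW_{13}$ explicitly. Writing $\WW_{13}=\sum_{\beta}\sum_{b,b'}e^\beta_{b,b'}\tens\I\tens u^\beta_{b,b'}$ and $\WW_{23}=\sum_{\gamma}\sum_{c,c'}\I\tens e^\gamma_{c,c'}\tens u^\gamma_{c,c'}$, the product in the third leg becomes $u^\gamma_{c,c'}u^\beta_{b,b'}$. Here is where Lemma \ref{ubug} enters: it rewrites such a product of matrix elements of two irreducibles as $\sum_{\alpha}\sum_i\sum_{a,a'}V(\alpha,\gamma\tp\beta,i)^{c,b}_a u^\alpha_{a,a'}\overline{V(\alpha,\gamma\tp\beta,i)^{c',b'}_{a'}}$ — note the order $\gamma\tp\beta$, since in $\WW_{23}\WW_{13}$ the factor $u^\gamma$ comes first. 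Substituting this in and collecting, the coefficient of $e^\alpha_{a,a'}$ (viewed as a sum over $\alpha,a,a'$ obtained by comparing the third legs, using linear independence of the $u^\alpha_{a,a'}$) is exactly $\sum_{\beta,\gamma}\sum_i\sum_{b,b',c,c'}V(\alpha,\gamma\tp\beta,i)^{c,b}_a\,(e^\gamma_{c,c'}\tens e^\beta_{b,b'})\,\overline{V(\alpha,\gamma\tp\beta,i)^{c',b'}_{a'}}$. After renaming the summation variables $\beta\leftrightarrow\gamma$ (equivalently relabelling which summand plays which role), this matches the claimed formula with $V(\alpha,\beta\tp\gamma,i)^{b,c}_a$; care with this bookkeeping is the only place an index error could creep in.

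To make the comparison-of-legs step rigorous I would apply a slice map: for a fixed functional of the form $\omega=\Bra{\xi^\alpha_a}\,\cdot\,\Ket{\xi^\alpha_{a'}}$ on the third leg $\B(\cH_\alpha)$, apply $(\id\tens\id\tens\omega)$ to both sides of $(\Delta_{\hh{\GG}}\tens\id)\WW=\WW_{23}\WW_{13}$. On the left this yields $\Delta_{\hh{\GG}}$ applied to $(\id\tens\omega)\WW$; since $(\id\tens\omega)\WW = (\id\tens\omega)\bigl(\sum_{\delta,k,l}e^\delta_{k,l}\tens u^\delta_{k,l}\bigr)$ and $\omega$ annihilates all legs except $\delta=\alpha$, one must be slightly careful: $u^\alpha_{a,a'}$ is not simply $(\id\tens\omega)\WW$, so instead I would pair the third leg against the dual basis element that extracts precisely the coefficient of $u^\alpha_{a,a'}$, using the biorthogonality of matrix elements under the Haar state combined with the Woronowicz characters (i.e.~the orthogonality relations from \cite[Section 1.4]{NT}). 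This identifies the $e^\alpha_{a,a'}$-component of the left side as $\Delta_{\hh{\GG}}(e^\alpha_{a,a'})$ and the right side as the stated expression.

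The main obstacle I expect is purely combinatorial: getting the index conventions consistent between Lemma \ref{ubug} (which is stated for $u^\beta_{b,b'}u^\gamma_{c,c'}$, with $\beta$ first) and the product $u^\gamma_{c,c'}u^\beta_{b,b'}$ that naturally arises from $\WW_{23}\WW_{13}$ (with $\gamma$ first), together with the fact that in $\Delta_{\hh{\GG}}(e^\alpha_{a,a'})$ the tensor factors appear in the order $e^\gamma\tens e^\beta$ — i.e.~"flipped" relative to the order of $\beta,\gamma$ in $V(\alpha,\beta\tp\gamma,i)$. This flip is exactly what one should expect from $\WW_{23}\WW_{13}$ (rather than $\WW_{13}\WW_{23}$) and from the convention that $\Delta_{\hh{\GG}}$ is the comultiplication dual to the multiplication on $\C(\GG)$; tracking it carefully is what the techniques borrowed from \cite{FLS-Fourier} are for. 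Once the bookkeeping is pinned down, no further analytic input is needed beyond density of $\operatorname{c}_{00}(\hh{\GG})$ and linear independence of matrix elements, so the proof is short modulo this care.
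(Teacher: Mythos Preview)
Your approach is exactly the paper's: expand both sides of \eqref{WW}, apply Lemma \ref{ubug} to the third-leg product, and read off $\Delta_{\hh{\GG}}(e^\alpha_{a,a'})$ by linear independence of the $u^\alpha_{a,a'}$. Two points, however.

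First, there is an index slip in your second paragraph. With your labeling ($\beta$ in leg~1 via $\WW_{13}$, $\gamma$ in leg~2 via $\WW_{23}$) the first two legs of $\WW_{23}\WW_{13}$ are $e^\beta_{b,b'}\tens e^\gamma_{c,c'}$, not $e^\gamma_{c,c'}\tens e^\beta_{b,b'}$; after the swap $\beta\leftrightarrow\gamma$, $b\leftrightarrow c$, $b'\leftrightarrow c'$ this \emph{does} give the claimed formula, whereas your displayed intermediate expression would not. The paper avoids this trap by labeling the other way round from the start: putting $\gamma$ on $\WW_{13}$ and $\beta$ on $\WW_{23}$ gives directly $e^\gamma_{c,c'}\tens e^\beta_{b,b'}\tens u^\beta_{b,b'}u^\gamma_{c,c'}$, so Lemma \ref{ubug} applies verbatim and no relabeling is needed.

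Second, your third paragraph is both confused and unnecessary. The third leg of $(\Delta_{\hh{\GG}}\tens\id)\WW$ lies in $\C(\GG)$, not $\B(\cH_\alpha)$, so the functional $\omega=\Bra{\xi^\alpha_a}\,\cdot\,\Ket{\xi^\alpha_{a'}}$ cannot be applied there; one would need a functional on $\C(\GG)$ dual to $u^\alpha_{a,a'}$ (indeed provided by the orthogonality relations), but this is overkill. The linear independence of $\{u^\alpha_{a,a'}\}$ in $\Pol(\GG)$, which you already invoke in paragraph two, is a basic fact of the theory and is all the paper uses.
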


\begin{proof}
We expand both sides of \eqref{WW}. The left hand side is
\[
(\Delta_{\hh{\GG}}\tens\id)\WW=\sum_{\alpha\in\Irr{\GG}}\sum_{a,a'}\Delta_{\hh{\GG}}(e^\alpha_{a,a'})\tens{u^\alpha_{a,a'}},
\]
while by Lemma \ref{ubug} the right hand side is
{\allowdisplaybreaks
\begin{align*}
\WW_{23}&\WW_{13}=\sum_{\beta,\gamma\in\Irr{\GG}}\sum_{b,b',c,c'}e^\gamma_{c,c'}\tens{e^\beta_{b,b'}}\tens{u^\beta_{b,b'}}u^\gamma_{c,c'}\\
&=\sum_{\beta,\gamma\in\Irr{\GG}}\sum_{b,b',c,c'}e^\gamma_{c,c'}\tens{e^\beta_{b,b'}}\\
&\qquad\tens\biggl(\sum_{\alpha\in\Irr{\GG}}\!\!\!\!\sum_{i=1}^{m(\alpha,\beta\stp\gamma)}\!\!\sum_{a,a'}
V(\alpha,\beta\tp\gamma,i)^{b,c}_a\,u^\alpha_{a,a'}\,\overline{V(\alpha,\beta\tp\gamma,i)^{b',c'}_{a'}}\biggr)\\
&=\sum_{\alpha\in\Irr{\GG}}\sum_{a,a'}\\
&\qquad\biggl(
\sum_{\beta,\gamma\in\Irr{\GG}}\!\!\!\!\sum_{i=1}^{m(\alpha,\beta\stp\gamma)}\!\!\sum_{b,b',c,c'}
V(\alpha,\beta\tp\gamma,i)^{b,c}_a
\bigl(e^\gamma_{c,c'}\tens{e^\beta_{b,b'}}\bigr)
\overline{V(\alpha,\beta\tp\gamma,i)^{b',c'}_{a'}}
\biggr)\tens{u^\alpha_{a,a'}}
\end{align*}
}and the result follows from linear independence of $\bigl\{u^\alpha_{a,a'}{\st\alpha\in\Irr{\GG},\:a,a'\in\{1,\dotsc,n_\alpha\}\bigr\}}$.
\end{proof}

\section{Spectral projections of $\uprho_U$ operators}\label{spectral}

For a finite dimensional unitary representation $U\in\B(\cH_U)\tens\C(\GG)$ of $\GG$ and a number $t>0$ let $\uprho_U(t)$ denote the spectral projection of $\uprho_U$ corresponding to the subset $\{t\}$ of $\RR_+$, i.e.~$\uprho_U(t)=\chi_{\{t\}}(\uprho_U)$. Similarly let $\cH_U(t)$ denote the range of the projection $\uprho_U(t)$. When $U=U^\alpha$ for some $\alpha\in\Irr{\GG}$ we will write $\uprho_\alpha(t)$ and $\cH_\alpha(t)$ as usual. The properties of these spectral projections are summarized in the next proposition.

\begin{proposition}\label{rhoprop}
Let $U\in\B(\cH_U)\tens\C(\GG)$ and $V\in\B(\cH_V)\tens\C(\GG)$ be finite dimensional unitary representations of $\GG$. Then for any $t>0$ we have
\begin{enumerate}
\item\label{rhoprop1} if $T\in\Mor(U,V)$ then $T\uprho_U(t)=\uprho_V(t)T$,
\item\label{rhoprop2} $\uprho_{U\oplus{V}}(t)=\uprho_U(t)\oplus\uprho_V(t)\in\B(\cH_U)\oplus\B(\cH_V)\subset\B(\cH_U\oplus\cH_V)$,
\item\label{rhoprop3} $\uprho_{U\stp{V}}(t)=\sum\limits_{t'>0}\uprho_U(t')\tens\uprho_V(t/t')\in\B(\cH_U\tens\cH_V)$,
\item\label{rhoprop4} $\uprho_{\overline{U}}(t)=\uprho_{U}(t^{-1})^{\top}$.
\end{enumerate}
\end{proposition}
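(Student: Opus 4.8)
The plan is to verify each of the four assertions directly from the defining property of the operators $\uprho_U$, namely that $\uprho_U$ is the unique positive invertible element of $\Mor(U,U^{\cc\cc})$ with $\operatorname{Tr}(\,\cdot\,\uprho_U)=\operatorname{Tr}(\,\cdot\,{\uprho_U}^{-1})$ on $\operatorname{End}(U)$, together with the standard functorial behaviour of $U\mapsto\uprho_U$ under the operations on representations recalled from \cite[Chapter 1]{NT}. The key observation is that in each case the operator $\uprho$ on the right-hand side built out of $\uprho_U$ (and $\uprho_V$) is again positive and invertible and intertwines the relevant representation with its double contragredient, so by uniqueness it must coincide with the corresponding $\uprho$; then the spectral projection onto $\{t\}$ is obtained simply by applying the Borel functional calculus and using that the functional calculus respects direct sums, tensor products, and transposition.

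\textbf{Direct sum and intertwiners.}\; For \eqref{rhoprop2} one uses that $\uprho_{U\oplus V}=\uprho_U\oplus\uprho_V$ (this is the additivity of $U\mapsto\uprho_U$ under direct sums), whence $\chi_{\{t\}}(\uprho_U\oplus\uprho_V)=\chi_{\{t\}}(\uprho_U)\oplus\chi_{\{t\}}(\uprho_V)$. For \eqref{rhoprop1}, if $T\in\Mor(U,V)$ then $T$ also intertwines $\uprho_U$ and $\uprho_V$ (by \cite[Proposition~1.4.4]{NT}, $\uprho_U$ is natural in $U$), i.e.~$T\uprho_U={\uprho_V}T$; since $\chi_{\{t\}}$ is a Borel function this forces $T\uprho_U(t)=\uprho_V(t)T$. (Alternatively one can expand $\chi_{\{t\}}$ as a limit of polynomials in $\uprho_U$, $\uprho_V$ on the relevant compact spectra and pass to the limit; but invoking naturality of $\uprho$ directly is cleaner.)

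\textbf{Tensor product and conjugate.}\; For \eqref{rhoprop3} one uses multiplicativity, $\uprho_{U\stp V}=\uprho_U\tens\uprho_V$, so that the spectral projection of $\uprho_U\tens\uprho_V$ onto $\{t\}$ decomposes according to the possible factorizations $t=t't''$ of the eigenvalue: $\chi_{\{t\}}(\uprho_U\tens\uprho_V)=\sum_{t'>0}\uprho_U(t')\tens\uprho_V(t/t')$, the sum being finite since $\uprho_U,\uprho_V$ have finitely many eigenvalues. For \eqref{rhoprop4} one uses \cite[Proposition~1.4.7]{NT} (already cited in the proof of Proposition~\ref{simpleSym}), which gives $\uprho_{\overline U}=({\uprho_U}^{-1})^\top$; since transposition is a $*$-anti-isomorphism it commutes with Borel functional calculus of self-adjoint elements, and inversion sends the eigenvalue $t$ of $\uprho_U$ to the eigenvalue $t^{-1}$ of ${\uprho_U}^{-1}$, so $\chi_{\{t\}}\bigl(({\uprho_U}^{-1})^\top\bigr)=\bigl(\chi_{\{t\}}({\uprho_U}^{-1})\bigr)^\top=\bigl(\chi_{\{t^{-1}\}}(\uprho_U)\bigr)^\top=\uprho_U(t^{-1})^\top$.

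\textbf{Main point of care.}\; None of the four steps is genuinely hard; the only thing requiring a moment's attention is bookkeeping — checking that the identities $\uprho_{U\oplus V}=\uprho_U\oplus\uprho_V$, $\uprho_{U\stp V}=\uprho_U\tens\uprho_V$, $\uprho_{\overline U}=({\uprho_U}^{-1})^\top$ are exactly the ones recorded in \cite[Chapter~1]{NT} (and that the first two follow from the additivity/multiplicativity of $d_t$ together with uniqueness, which is already noted in Section~\ref{notation}). Once these are in hand the proposition is immediate from the naturality of the Borel functional calculus with respect to $\oplus$, $\tens$ and $(\,\cdot\,)^\top$.
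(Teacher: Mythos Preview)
Your proposal is correct and follows essentially the same approach as the paper: parts \eqref{rhoprop2}--\eqref{rhoprop4} are deduced exactly as you do, from the identities $\uprho_{U\oplus V}=\uprho_U\oplus\uprho_V$, $\uprho_{U\stp V}=\uprho_U\tens\uprho_V$, $\uprho_{\overline U}=({\uprho_U}^{-1})^\top$ (the paper cites \cite[Section~1.4]{NT} for these) and functional calculus. For part \eqref{rhoprop1} the paper argues slightly more explicitly: rather than citing naturality of $\uprho$, it applies $(\id\tens f_n)$ to the intertwining relation $(T\tens\I)U=V(T\tens\I)$ to obtain $T\uprho_U^n=\uprho_V^nT$ for all $n$, and then reads off $T\uprho_U(t)=\uprho_V(t)T$ from the spectral decomposition --- but this is just an unpacking of the naturality statement you invoke, so the difference is cosmetic.
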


\begin{proof}
As in Section \ref{notation} we let $\{f_z\}_{z\in\CC}$ be the family of Woronowicz characters of $\GG$. Applying $(\id\tens{f_n})$ to both sides of 
\[
(T\tens \I)U=V(T\tens\I)
\]
we obtain
\[
\sum_{t>0}Tt^n\uprho_U(t)=T\uprho_{U}^n=\uprho_V^nT=\sum_{t>0}t^n\uprho_V(t)T,
\]
which implies
\[
T\uprho_U(t)=\uprho_V(t)T,\qqquad{t}>0.
\]

Points \eqref{rhoprop2}, \eqref{rhoprop3} and \eqref{rhoprop4} follow from the equalities
\begin{equation}\label{cfNT}
\uprho_{U\oplus V}=\uprho_{U}\oplus \uprho_{V},\quad
\uprho_{U\tp V}=\uprho_{U}\tens\uprho_{V}\quad\text{and}\quad
\uprho_{\overline{U}}=({\uprho_{U}}^{-1})^{\top}
\end{equation}
(see \cite[Section 1.4]{NT}).
\end{proof}

\begin{proposition}\label{mprop}
For any $\alpha,\beta,\gamma\in\Irr{\GG}$ we have
$m(\alpha,\beta\tp\gamma)=m(\beta,\alpha\tp\overline{\gamma})=m(\gamma,\overline{\beta}\tp\alpha)$.
\end{proposition}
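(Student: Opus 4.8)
The plan is to establish the identity $m(\alpha,\beta\tp\gamma)=m(\beta,\alpha\tp\overline{\gamma})$ by exhibiting a natural isomorphism of intertwiner spaces via Frobenius reciprocity, and then to obtain the second equality $m(\alpha,\beta\tp\gamma)=m(\gamma,\overline{\beta}\tp\alpha)$ by an analogous argument on the other leg of the tensor product (or by combining the first identity with the symmetry $m(\alpha,\beta\tp\gamma)=m(\overline{\alpha},\overline{\gamma}\tp\overline{\beta})$ coming from conjugation). Concretely, since $m(\alpha,\beta\tp\gamma)=\dim\Mor(U^\alpha,U^\beta\tp U^\gamma)$, the first step is to recall that for the conjugate representation $\overline{U^\gamma}$ there exist solutions $R_\gamma\in\Mor(\I,\overline{U^\gamma}\tp U^\gamma)$ and $\bar R_\gamma\in\Mor(\I,U^\gamma\tp\overline{U^\gamma})$ of the conjugate equations (\cite[Section 1.4]{NT}), and these induce mutually inverse linear maps
\[
\Mor(U^\alpha,U^\beta\tp U^\gamma)\longleftrightarrow\Mor(U^\alpha\tp\overline{U^\gamma},U^\beta)
\]
by $T\mapsto(\id_{\cH_\beta}\tens(\bar R_\gamma^*))(T\tens\id_{\cH_{\overline\gamma}})$ and the reverse assignment using $R_\gamma$; that the two composites are the identity is exactly the content of the conjugate equations.

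Next I would pass from $\Mor(U^\alpha\tp\overline{U^\gamma},U^\beta)$ to $\Mor(U^\beta,U^\alpha\tp\overline{U^\gamma})$. This step uses that all representations here are unitary, so taking adjoints gives a conjugate-linear bijection between $\Mor(X,Y)$ and $\Mor(Y,X)$; in particular the two spaces have the same dimension. Combining with the previous paragraph yields
\[
m(\alpha,\beta\tp\gamma)=\dim\Mor(U^\alpha,U^\beta\tp U^\gamma)=\dim\Mor(U^\beta,U^\alpha\tp\overline{U^\gamma})=m(\beta,\alpha\tp\overline{\gamma}),
\]
which is the first asserted equality. For the second equality I would run the same Frobenius reciprocity on the left tensor factor instead: using solutions of the conjugate equations for $\overline{U^\beta}$, one gets
\[
\Mor(U^\alpha,U^\beta\tp U^\gamma)\cong\Mor(\overline{U^\beta}\tp U^\alpha,U^\gamma)\cong\Mor(U^\gamma,\overline{U^\beta}\tp U^\alpha),
\]
giving $m(\alpha,\beta\tp\gamma)=m(\gamma,\overline{\beta}\tp\alpha)$. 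Alternatively one can derive it from the first identity applied twice together with the elementary fact $m(\alpha,\beta\tp\gamma)=m(\overline\alpha,\overline\gamma\tp\overline\beta)$ (conjugation reverses tensor products and is a bijection on $\Irr\GG$), but the direct Frobenius argument is cleaner and avoids bookkeeping with bars.

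The only genuinely delicate point is making the Frobenius reciprocity maps precise: one must be careful about the order of tensor factors (the statement pairs $\beta\tp\gamma$ with $\overline\gamma$ on the \emph{right} of $\alpha$, matching the fact that $R_\gamma$ lives in $\overline{U^\gamma}\tp U^\gamma$ while $\bar R_\gamma$ lives in $U^\gamma\tp\overline{U^\gamma}$), and one must check that the maps indeed land in intertwiner spaces — this is a direct computation using that $R_\gamma,\bar R_\gamma$ are themselves intertwiners. Everything else is formal: finite-dimensionality guarantees the multiplicities equal the intertwiner-space dimensions, unitarity gives the adjoint bijection, and the conjugate equations give invertibility. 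I do not expect to need the explicit $\uprho$-twisted normalization of $R_\gamma,\bar R_\gamma$; any solution of the conjugate equations suffices since we only track dimensions.
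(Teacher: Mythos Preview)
Your proposal is correct and is essentially the same argument as the paper's: the paper simply invokes Frobenius reciprocity (\cite[Theorem 2.2.6]{NT}) to get $\dim\Mor(U^\alpha,U^\beta\tp U^\gamma)=\dim\Mor(U^\alpha\tp\overline{U^\gamma},U^\beta)$ and its left-handed analogue, then uses the adjoint bijection on intertwiners, exactly as you do. The only difference is that you unpack the Frobenius reciprocity map explicitly via the solutions $R_\gamma,\bar R_\gamma$ of the conjugate equations, whereas the paper cites the result as a black box.
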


\begin{proof}
Using \cite[Theorem 2.2.6]{NT} we obtain:
\[
\begin{split}
m(\alpha,\beta\tp\gamma)
&=\dim{\Mor(\alpha,\beta\tp\gamma)}=\dim{\Mor(\alpha\tp\overline{\gamma},\beta)}
=\dim{\Mor(\beta,\alpha\tp\overline{\gamma})}=m(\beta,\alpha\tp\overline{\gamma}),\\
m(\alpha,\beta\tp\gamma)
&=\dim{\Mor(\alpha,\beta\tp\gamma)}=\dim{\Mor(\overline{\beta}\tp\alpha,\gamma)}
=\dim{\Mor(\gamma,\overline{\beta}\tp\alpha)}=m(\gamma,\overline{\beta}\tp\alpha).
\end{split}
\]
\end{proof}

\begin{theorem}\label{eqThm}
For any $\alpha,\beta\in\Irr{\GG}$ and $s,t>0$ we have
\begin{subequations}\label{eq}
\begin{align}
\sum_{\gamma\in\Irr{\GG}}\!\!\!\!\!\sum_{i=1}^{m(\alpha,\gamma\stp\beta)}\!\!\!\!d_{\gamma}V(\alpha,\gamma\tp\beta,i)^*\bigl(\uprho_\gamma(s)\tens\uprho_\beta(t)\bigr)V(\alpha,\gamma\tp\beta,i)
&=\tfrac{d_\alpha}{t}\bigl(\dim\cH_\beta(t)\bigr)\uprho_\alpha(st),\label{eq1}\\
\sum_{\gamma\in\Irr{\GG}}\!\!\!\!\!\sum_{i=1}^{m(\alpha,\beta\stp\gamma)}\!\!\!\!d_{\gamma}V(\alpha,\beta\tp\gamma,i)^*\bigl(\uprho_\beta(t)\tens\uprho_\gamma(s)\bigr)V(\alpha,\beta\tp\gamma,i)
&=d_\alpha{t}\bigl(\dim\cH_\beta(t)\bigr)\uprho_\alpha(st).\label{eq2}
\end{align}
\end{subequations}
\end{theorem}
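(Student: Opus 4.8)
The plan is to prove \eqref{eq1} directly by computing both sides against matrix units and using the explicit formula for $\Delta_{\hh{\GG}}$ from Proposition \ref{deltahat}, together with the intertwining property \eqref{rhoprop1} of the spectral projections. First I would observe that the family $\{\uprho_U(t)\}_{t>0}$ is, up to normalisation, exactly the family of operators appearing in the Woronowicz-character decomposition; more precisely, $\uprho_\gamma(s)$ is a self-intertwiner of $U^\gamma$, so by Frobenius reciprocity (Proposition \ref{mprop}) and the block decomposition \eqref{gba} the operator $V(\alpha,\gamma\tp\beta,i)^*(\uprho_\gamma(s)\tens\uprho_\beta(t))V(\alpha,\gamma\tp\beta,i)$ is a self-intertwiner of $U^\alpha$, hence — since $U^\alpha$ is irreducible — a scalar multiple of the identity on the summand, and more generally the left-hand side of \eqref{eq1}, being a self-intertwiner of $U^\alpha$ commuting with $\uprho_\alpha$, must be a linear combination of the spectral projections $\uprho_\alpha(\cdot)$. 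So the real content is: (i) the only projection that appears is $\uprho_\alpha(st)$, and (ii) the coefficient is $\tfrac{d_\alpha}{t}\dim\cH_\beta(t)$.

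For (i) I would use Proposition \ref{rhoprop}\eqref{rhoprop3}: $\uprho_{\gamma\stp\beta}(r)=\sum_{r'>0}\uprho_\gamma(r')\tens\uprho_\beta(r/r')$, and restricting the equivalence \eqref{gba} to the eigenspace $\cH_{\gamma\stp\beta}(st)$ shows (via \eqref{rhoprop1}) that $V(\alpha,\gamma\tp\beta,i)$ carries $\cH_\alpha(st)$ into $\cH_\gamma(s)\tens\cH_\beta(t)$ (the $(s,t)$-summand), so the compression $V(\alpha,\gamma\tp\beta,i)^*(\uprho_\gamma(s)\tens\uprho_\beta(t))V(\alpha,\gamma\tp\beta,i)$ acts as $st$ times $\uprho_\alpha(st)$ on that eigenspace and vanishes on the orthogonal complement — hence is proportional to $\uprho_\alpha(st)$ after the appropriate bookkeeping. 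For (ii) the coefficient is pinned down by taking a trace: applying $\operatorname{Tr}(\,\cdot\,\uprho_\alpha^{-1})$ (or $\operatorname{Tr}$) to both sides and using that $\operatorname{Tr}(\uprho_\gamma(s)x)$-type quantities are governed by the Woronowicz characters $f_z$, together with the additivity/multiplicativity of $d_t$ and the identities $\sum_i V(\alpha,\gamma\tp\beta,i)V(\alpha,\gamma\tp\beta,i)^*$ summing to the spectral projection onto the $\alpha$-isotypic component of $\gamma\tp\beta$. Concretely, I expect to compute $\sum_{\gamma,i}d_\gamma\operatorname{Tr}\bigl(V(\alpha,\gamma\tp\beta,i)^*(\uprho_\gamma(s)\tens\uprho_\beta(t))V(\alpha,\gamma\tp\beta,i)\,\uprho_\alpha^{-1}\bigr)$ by rewriting it inside $\cH_\gamma\tens\cH_\beta$ as a trace against $\uprho_{\gamma\stp\beta}$ restricted to the relevant spectral subspace, and recognising the Frobenius-reciprocity pairing from \cite[Theorem 2.2.6]{NT}; the factor $1/t$ and $\dim\cH_\beta(t)$ then emerge from the partial trace over $\cH_\beta(t)$ against $\uprho_\beta(t)^{\pm1}=t^{\pm1}\uprho_\beta(t)$.

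Equation \eqref{eq2} I would deduce from \eqref{eq1} rather than reprove: replacing $\beta$ by $\overline\beta$ and using Proposition \ref{mprop} ($m(\alpha,\gamma\tp\beta)=m(\gamma,\overline\beta\tp\alpha)$ and similar), Proposition \ref{rhoprop}\eqref{rhoprop4} ($\uprho_{\overline\beta}(t)=\uprho_\beta(t^{-1})^\top$, so that $\dim\cH_{\overline\beta}(t)=\dim\cH_\beta(t^{-1})$), together with the way the intertwiners $V$ transform under taking conjugates (via the conjugate-equation/duality morphisms $R_\beta,\bar R_\beta$ of \cite[Section 1.4]{NT}); the substitution $t\mapsto t^{-1}$ then turns the factor $1/t$ from \eqref{eq1} into $t$. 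The main obstacle I anticipate is bookkeeping of the $V$-intertwiners: keeping track of which leg is which in the tensor products (note the swap $e^\gamma_{c,c'}\tens e^\beta_{b,b'}$ in Proposition \ref{deltahat}), of the relation $\sum_i V(\alpha,\gamma\tp\beta,i)V(\alpha,\gamma\tp\beta,i)^*=$ (isotypic projection), and of the precise normalisation constants $d_\gamma$, so that the trace computation in (ii) produces exactly $\tfrac{d_\alpha}{t}\dim\cH_\beta(t)$ with no stray factors; getting the conjugation step for \eqref{eq2} to respect all these normalisations is the delicate part.
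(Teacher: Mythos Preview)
Your plan has a real gap at step (i)--(ii), and it is not just bookkeeping. You assert that $\uprho_\gamma(s)$ is a self-intertwiner of $U^\gamma$, hence that each compression $V(\alpha,\gamma\tp\beta,i)^*\bigl(\uprho_\gamma(s)\tens\uprho_\beta(t)\bigr)V(\alpha,\gamma\tp\beta,i)$ lies in $\operatorname{End}(U^\alpha)=\CC\I$. But $\uprho_\gamma\in\Mor(U^\gamma,(U^\gamma)^{\cc\cc})$, not $\operatorname{End}(U^\gamma)$; Proposition~\ref{rhoprop}\eqref{rhoprop1} only says that genuine intertwiners commute with $\uprho_U(t)$, not that $\uprho_U(t)$ is itself an intertwiner. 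Consequently neither the individual terms nor their $d_\gamma$-weighted sum lie in $\operatorname{End}(U^\alpha)$ (indeed the right-hand side $\uprho_\alpha(st)$ is generally a proper projection, not a scalar). What you \emph{do} correctly extract from Proposition~\ref{rhoprop} is that the left-hand side vanishes on $\cH_\alpha(st)^\perp$; but on $\cH_\alpha(st)$ it is, a priori, an arbitrary positive operator, and your trace computation in (ii) only determines a single number, not the operator.

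The missing ingredient is precisely what the paper supplies: the weights $d_\gamma$ are the Haar weights of the dual $\hh{\GG}$, so the sum $\sum_{\gamma,i}d_\gamma V^*(\,\cdot\,)V$ is (after unravelling Proposition~\ref{deltahat}) the map $(\bh_{\hh{\GG}}\tens\id)\Delta_{\hh{\GG}}$ or $(\id\tens\bh_{\hh{\GG}})\Delta_{\hh{\GG}}$ applied to suitable matrix units. The modular identities
\[
(\bh_{\hh{\GG}}\tens\id)\comp\Delta_{\hh{\GG}}=\bh_{\hh{\GG}}(\,\cdot\,)\I,\qquad
(\id\tens\bh_{\hh{\GG}})\comp\Delta_{\hh{\GG}}=\bh_{\hh{\GG}}(\,\cdot\,)\textstyle\bigoplus_\alpha\uprho_\alpha^{-2}
\]
are what force the result to be a \emph{specific} operator (namely a multiple of $\uprho_\alpha$, then decomposed via Proposition~\ref{rhoprop}\eqref{rhoprop3} into spectral pieces), rather than merely something supported on the right eigenspace. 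Your trace argument cannot substitute for this: you would need the operator equation first, then the trace pins down constants---not the other way around. As a side remark, the paper proves \eqref{eq1} and \eqref{eq2} in parallel from the two modular identities rather than deducing one from the other by conjugation; your conjugation idea for \eqref{eq2} is plausible but would still rest on a valid proof of \eqref{eq1}.
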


\begin{remark}
When $\GG$ is of Kac type equations \eqref{eq1}, \eqref{eq2} reduce to
\[
\begin{split}
\delta_{s,1}\delta_{t,1}\sum_{\gamma\in\Irr{\GG}}m(\alpha,\gamma\tp\beta)n_\gamma&=\delta_{s,1}\delta_{t,1}n_\alpha{n_\beta},\\
\delta_{s,1}\delta_{t,1}\sum_{\gamma\in\Irr{\GG}}m(\alpha,\beta\tp\gamma)n_\gamma&=\delta_{s,1}\delta_{t,1}n_\alpha{n_\beta},	
\end{split}
\]
which can be seen as an obvious equality of dimensions (see Proposition \ref{mprop})
\[
\dim\biggl(\bigoplus_{\gamma\in\Irr{\GG}}m(\gamma,\alpha\tp\overline{\beta})\cdot{U^\gamma}\biggr)
=\dim\bigl(\alpha\tp\beta\bigr)
=\dim\biggl(\bigoplus_{\gamma\in\Irr{\GG}}m(\gamma,\overline{\beta}\tp\alpha)\cdot{U^\gamma}\biggr).
\]
\end{remark}

\begin{proof}[Proof of Theorem \ref{eqThm}] 
Let $\bh_{\hh{\GG}}$ be the right Haar measure of $\hh{\GG}$:
\[
\bh_{\hh{\GG}}(x)=\sum_{\alpha\in\Irr{\GG}}d_\alpha\operatorname{Tr}\bigl(\uprho_\alpha\pi_\alpha(x)\bigr),\qqquad{x}\in\operatorname{c}_{00}(\hh{\GG}),
\]
where $\pi_\alpha:\operatorname{c}_{00}(\hh{\GG})\to\B(\cH_\alpha)$ is the canonical projection (cf.~\cite[Section 3]{qLor}). Note that 
\begin{equation}\label{he}
\bh_{\hh{\GG}}(e^\alpha_{a,a'})=
\sum_{\beta\in\Irr{\GG}}d_\beta\operatorname{Tr}_\beta(\uprho_\beta\,e^\alpha_{a,a'})
=d_\alpha\sum_{a''}\is{\xi^\alpha_{a''}}{\uprho_\beta\,e^\alpha_{a,a'}\,\xi^\alpha_{a''}}
=d_\alpha\is{\xi^\alpha_{a'}}{\uprho_\alpha\,\xi^\alpha_{a}}
\end{equation}
for any $\alpha\in\Irr{\GG}$ and $a,a'\in\{1,\dotsc,n_\alpha\}$.

Modular properties of $\bh_{\hh{\GG}}$ are described by operators $\{\uprho_\alpha\}_{\alpha\in\Irr{\GG}}$:
\begin{subequations}
\begin{align}
(\id\tens\bh_{\hh{\GG}})\comp\Delta_{\hh{\GG}}&=\bh_{\hh{\GG}}(\,\cdot\,)\bigoplus_{\alpha\in\Irr{\GG}}{\uprho_\alpha}^{-2},\label{modu1}\\
(\bh_{\hh{\GG}}\tens\id)\comp\Delta_{\hh{\GG}}&=\bh_{\hh{\GG}}(\,\cdot\,)\I,\label{modu2}
\end{align}
\end{subequations}
(\cite[Theorem 3.3]{qLor}).

Choose $\alpha\in\Irr{\GG}$ and $a,a'\in\{1,\dotsc,n_\alpha\}$. Using \eqref{he}, the formula for $\Delta_{\hh{\GG}}$ given in Proposition \ref{deltahat} and \eqref{modu1} we get
\begin{equation}\label{ppeq1}
\begin{split}
&\sum_{\beta,\gamma\in\Irr{\GG}}\!\!\!\!\!\!\sum_{i=1}^{m(\alpha,\beta\stp\gamma)}\!\!\!\!\sum_{b,b',c,c'}
V(\alpha,\beta\tp\gamma,i)^{b,c}_a\,e^\gamma_{c,c'}\,d_\beta\is{\xi^\beta_{b'}}{\uprho_\beta\,\xi^\beta_{b}}\overline{V(\alpha,\beta\tp\gamma,i)^{b',c'}_{a'}}\\
&\qquad=\sum_{\beta,\gamma\in\Irr{\GG}}\!\!\!\!\!\!\sum_{i=1}^{m(\alpha,\beta\stp\gamma)}\!\!\!\!\sum_{b,b',c,c'}
V(\alpha,\beta\tp\gamma,i)^{b,c}_a\,e^\gamma_{c,c'}\,\bh_{\hh{\GG}}(e^\beta_{b,b'})\overline{V(\alpha,\beta\tp\gamma,i)^{b',c'}_{a'}}\\
&\qquad=(\id\tens\bh_{\hh{\GG}})\Delta_{\hh{\GG}}(e^\alpha_{a,a'})
=\bh_{\hh{\GG}}(e^\alpha_{a,a'})\bigoplus_{\gamma\in\Irr{\GG}}{\uprho_\gamma}^{-2}\\
&\qquad=\sum_{\gamma\in\Irr{\GG}}\sum_{c,c'}d_\alpha\is{\xi^\alpha_{a'}}{\uprho_\alpha\,\xi^\alpha_{a}}\is{\xi^{\gamma}_{c}}{{\uprho_\gamma}^{-2}\,\xi^{\gamma}_{c'}}e^{\gamma}_{c,c'}
\end{split}
\end{equation}
and similarly (this time using \eqref{modu2})
\begin{equation}\label{ppeq2}
\begin{split}
&\sum_{\beta,\gamma\in\Irr{\GG}}\!\!\!\!\!\!\sum_{i=1}^{m(\alpha,\beta\stp\gamma)}\!\!\!\!\sum_{b,b',c,c'}
V(\alpha,\beta\tp\gamma,i)^{b,c}_ad_\gamma\is{\xi^{\gamma}_{c'}}{\uprho_\gamma\,\xi^{\gamma}_{c}}e^\beta_{b,b'}\overline{V(\alpha,\beta\tp\gamma,i)^{b',c'}_{a'}}\\
&\qquad=\sum_{\beta,\gamma\in\Irr{\GG}}\!\!\!\!\!\!\sum_{i=1}^{m(\alpha,\beta\stp\gamma)}\!\!\!\!\sum_{b,b',c,c'}
V(\alpha,\beta\tp\gamma,i)^{b,c}_a\bh_{\hh{\GG}}(e^\gamma_{c,c'})e^\beta_{b,b'}\overline{V(\alpha,\beta\tp\gamma,i)^{b',c'}_{a'}}\\
&\qquad=(\bh_{\hh{\GG}}\tens\id)\Delta_{\hh{\GG}}(e^\alpha_{a,a'})=\bh_{\hh{\GG}}(e^\alpha_{a,a'})\I\\
&\qquad=\sum_{\beta\in\Irr{\GG}}\sum_{b,b'}d_\alpha\is{\xi^\alpha_{a'}}{\uprho_\alpha\,\xi^\alpha_{a}}\delta_{b,b'}e^\beta_{b,b'}.
\end{split}
\end{equation}

If we equate appropriate coefficients in equations \eqref{ppeq1}, \eqref{ppeq2} we get
\begin{equation}\label{ppeq3}
\begin{split}
\sum_{\beta\in\Irr{\GG}}\!\!\!\!\sum_{i=1}^{m(\alpha,\beta\stp\gamma)}\!\!\sum_{b,b'}
&V(\alpha,\beta\tp\gamma,i)^{b,c}_ad_\beta\is{\xi^\beta_{b'}}{\uprho_\beta\,\xi^\beta_{b}}\overline{V(\alpha,\beta\tp\gamma,i)^{b',c'}_{a'}}\\
&=d_\alpha\is{\xi^\alpha_{a'}}{\uprho_\alpha\,\xi^\alpha_{a}}\is{\xi^\gamma_c}{{\uprho_\gamma}^{-2}\,\xi^\gamma_{c'}}
\end{split}
\end{equation}
and
\begin{equation}\label{ppeq4}
\begin{split}
\sum_{\gamma\in\Irr{\GG}}\!\!\!\!\sum_{i=1}^{m(\alpha,\beta\stp\gamma)}\!\!\sum_{c,c'}
&V(\alpha,\beta\tp\gamma,i)^{b,c}_ad_\gamma\is{\xi^\gamma_{c'}}{\uprho_\gamma\,\xi^\gamma_c}\overline{V(\alpha,\beta\tp\gamma,i)^{b',c'}_{a'}}\\
&=d_\alpha\is{\xi^\alpha_{a'}}{\uprho_\alpha\,\xi^\alpha_a}\delta_{b,b'}.
\end{split}
\end{equation}

Now let us fix $t>0$, multiply both sides of \eqref{ppeq3} by $\is{\uprho_\gamma(t)\,\xi^\gamma_{c'}}{\xi^\gamma_c}$ and sum over $c,c'\in\{1,\dotsc,n_\gamma\}$.
\begin{equation}\label{ppeq5}
\begin{split}
\sum_{\beta\in\Irr{\GG}}\!\!\!\!\sum_{i=1}^{m(\alpha,\beta\stp\gamma)}\!\!\!\!\sum_{b,b',c,c'}
&V(\alpha,\beta\tp\gamma,i)^{b,c}_ad_\beta\is{\uprho_\gamma(t)\,\xi^\gamma_{c'}}{\xi^\gamma_c}\is{\xi^\beta_{b'}}{\uprho_\beta\,\xi^\beta_b}\overline{V(\alpha,\beta\tp\gamma,i)^{b',c'}_{a'}}\\
&=\sum_{c,c'}d_\alpha\is{\xi^\alpha_{a'}}{\uprho_\alpha\,\xi^\alpha_a}\is{\uprho_\gamma(t)\,\xi^\gamma_{c'}}{\xi^\gamma_c}\is{\xi^\gamma_c}{{\uprho_\gamma}^{-2}\,\xi^\gamma_{c'}}
\end{split}
\end{equation}
and similarly multiplying both sides of \eqref{ppeq4} by $\is{\uprho_\beta(t)\,\xi^\beta_{b'}}{\xi^\gamma_b}$ and summing over $b,b'\in\{1,\dotsc,n_\beta\}$ we get
\begin{equation}\label{ppeq6}
\begin{split}
\sum_{\gamma\in\Irr{\GG}}\!\!\!\!\sum_{i=1}^{m(\alpha,\beta\stp\gamma)}\!\!\!\!\sum_{b,b',c,c'}
&V(\alpha,\beta\tp\gamma,i)^{b,c}_ad_\gamma
\is{\uprho_\beta(t)\,\xi^\beta_{b'}}{\xi^\beta_b}\is{\xi^\gamma_{c'}}{\uprho_\gamma\,\xi^\gamma_c}\overline{V(\alpha,\beta\tp\gamma,i)^{b',c'}_{a'}}\\
&=\sum_{b,b'}d_\alpha\is{\xi^\alpha_{a'}}{\uprho_\alpha\,\xi^\alpha_a}\is{\uprho_\beta(t)\,\xi^\beta_{b'}}{\xi^\beta_b}\delta_{b,b'}.
\end{split}
\end{equation}

Let us rewrite the left hand side of \eqref{ppeq5} in the following way:
{\allowdisplaybreaks
\begin{align*}
&\sum_{\beta\in\Irr{\GG}}\!\!\!\!\sum_{i=1}^{m(\alpha,\beta\stp\gamma)}\!\!\!\!\sum_{b,b',c,c'}
V(\alpha,\beta\tp\gamma,i)^{b,c}_ad_\beta\is{\uprho_\gamma(t)\,\xi^\gamma_{c'}}{\xi^\gamma_c}\is{\xi^\beta_{b'}}{\uprho_\beta\,\xi^\beta_b}\overline{V(\alpha,\beta\tp\gamma,i)^{b',c'}_{a'}}\\
&\quad=\sum_{\beta\in\Irr{\GG}}\!\!\!\!\sum_{i=1}^{m(\alpha,\beta\stp\gamma)}\!\!\!\!\sum_{b,b',c,c'}
V(\alpha,\beta\tp\gamma,i)^{b,c}_ad_\beta\is{\bigl(\uprho_\beta\tens\uprho_\gamma(t)\bigr)(\xi^\beta_{b'}\tens\xi^\gamma_{c'})}{\xi^\beta_b\tens\xi^\gamma_c}\overline{V(\alpha,\beta\tp\gamma,i)^{b',c'}_{a'}}\\
&\quad=\sum_{\beta\in\Irr{\GG}}\!\!\!\!\!\sum_{i=1}^{m(\alpha,\beta\stp\gamma)}\!\!\!\!
d_\beta\is{\bigl(\uprho_\beta\tens\uprho_\gamma(t)\bigr)V(\alpha,\beta\tp\gamma,i)\xi^\alpha_{a'}}{V(\alpha,\beta\tp\gamma,i)\xi^\alpha_a}\\
&\quad=\is{\xi^\alpha_{a'}}
{\sum_{\beta\in\Irr{\GG}}\!\!\!\!\!\sum_{i=1}^{m(\alpha,\beta\stp\gamma)}\!\!\!\!d_{\beta}V(\alpha,\beta\tp\gamma,i)^*\bigl(\uprho_\beta\tens\uprho_\gamma(t)\bigr)V(\alpha,\beta\tp\gamma,i)\xi^\alpha_a}
\end{align*}
}and similarly with the left hand side of \eqref{ppeq6}:
\[
\begin{split}
&\sum_{\gamma\in\Irr{\GG}}\!\!\!\!\sum_{i=1}^{m(\alpha,\beta\stp\gamma)}\!\!\!\!\sum_{b,b',c,c'}
V(\alpha,\beta\tp\gamma,i)^{b,c}_ad_\gamma\is{\uprho_\beta(t)\,\xi^\beta_{b'}}{\xi^\gamma_b}\is{\xi^\gamma_{c'}}{\uprho_\gamma\,\xi^\gamma_c}\overline{V(\alpha,\beta\tp\gamma,i)^{b',c'}_{a'}}\\
&\quad=\is{\xi^\alpha_{a'}}
{\sum_{\gamma\in\Irr{\GG}}\!\!\!\!\!\sum_{i=1}^{m(\alpha,\beta\stp\gamma)}\!\!\!\!d_{\gamma}V(\alpha,\beta\tp\gamma,i)^*\bigl(\uprho_\beta(t)\tens\uprho_\gamma\bigr)V(\alpha,\beta\tp\gamma,i)\xi^\alpha_a}.
\end{split}
\]
Now the right hand sides of \eqref{ppeq5} and \eqref{ppeq6} are respectively
\[
\begin{split}
\sum_{c,c'}d_\alpha\is{\xi^\alpha_{a'}}{\uprho_\alpha\,\xi^\alpha_a}\is{\uprho_\gamma(t)\,\xi^\gamma_{c'}}{\xi^\gamma_c}\is{\xi^\gamma_c}{{\uprho_\gamma}^{-2}\,\xi^\gamma_{c'}}
&=d_\alpha\is{\xi^\alpha_{a'}}{\uprho_\alpha\,\xi^\alpha_a}\sum_{c'}\is{{\uprho_\gamma}^{-2}\uprho_\gamma(t)\,\xi^\gamma_{c'}}{\xi^\gamma_{c'}}\\
=d_\alpha\is{\xi^\alpha_{a'}}{\uprho_\alpha\,\xi^\alpha_a}t^{-2}\dim{\cH_\gamma(t)}
&=\is{\xi^\alpha_{a'}}{d_\alpha{t^{-2}}\bigl(\dim{\cH_\gamma(t)}\bigr)\uprho_\alpha\,\xi^\alpha_a}
\end{split}
\]
and
\[
\sum_{b,b'}d_\alpha\is{\xi^\alpha_{a'}}{\uprho_\alpha\,\xi^\alpha_a}\is{\uprho_\beta(t)\,\xi^\beta_{b'}}{\xi^\beta_b}\delta_{b,b'}=
\is{\xi^\alpha_{a'}}{d_\alpha\bigl(\dim{\cH_\beta(t)}\bigr)\uprho_\alpha\,\xi^\alpha_a}.
\]
Combining the above results and using the fact that $\{\xi^\alpha_1,\dotsc,\xi^\alpha_{n_\alpha}\}$ is a basis of $\cH_\alpha$ we arrive at
\begin{equation}\label{ppeq7}
\sum_{\beta\in\Irr{\GG}}\!\!\!\!\!\sum_{i=1}^{m(\alpha,\beta\stp\gamma)}\!\!\!\!d_{\beta}V(\alpha,\beta\tp\gamma,i)^*\bigl(\uprho_\beta\tens\uprho_\gamma(t)\bigr)V(\alpha,\beta\tp\gamma,i)
=d_\alpha{t^{-2}}\bigl(\dim{\cH_\gamma(t)}\bigr)\uprho_\alpha,
\end{equation}
and
\begin{equation}\label{ppeq8}
\sum_{\gamma\in\Irr{\GG}}\!\!\!\!\!\sum_{i=1}^{m(\alpha,\beta\stp\gamma)}\!\!\!\!d_{\gamma}V(\alpha,\beta\tp\gamma,i)^*\bigl(\uprho_\beta(t)\tens\uprho_\gamma\bigr)V(\alpha,\beta\tp\gamma,i)
=d_\alpha\bigl(\dim{\cH_\beta(t)}\bigr)\uprho_\alpha.
\end{equation}

Take arbitrary $s>0$. From Proposition \ref{rhoprop}\eqref{rhoprop3} we know that
\[
\begin{split}
\bigl(\uprho_\beta\tens\uprho_\gamma(t)\bigr)V(\alpha,\beta\tp\gamma,i)
&=s\bigl(\uprho_\beta(s)\tens\uprho_\gamma(t)\bigr)V(\alpha,\beta\tp\gamma,i),\\
\bigl(\uprho_\beta(t)\tens\uprho_\gamma)V(\alpha,\beta\tp\gamma,i)
&=s\bigl(\uprho_\beta(t)\tens\uprho_\gamma(s)\bigr)V(\alpha,\beta\tp\gamma,i)
\end{split}
\]
on $\cH_\alpha(st)$, while on $\cH_\alpha(st)^{\perp}$ we have
\[
\bigl(\uprho_\beta(s)\tens\uprho_\gamma(t)\bigr)V(\alpha,\beta\tp\gamma,i)=\bigl(\uprho_\beta(t)\tens\uprho_\gamma(s)\bigr)V(\alpha,\beta\tp\gamma,i)=\uprho_\alpha(st)=0.
\]
Therefore, we can rewrite equations \eqref{ppeq7} and \eqref{ppeq8} as
\[
\begin{split}
\sum_{\beta\in\Irr{\GG}}\!\!\!\!\!\sum_{i=1}^{m(\alpha,\beta\stp\gamma)}\!\!\!\!d_{\beta}V(\alpha,\beta\tp\gamma,i)^*\bigl(\uprho_\beta(s)\tens\uprho_\gamma(t)\bigr)V(\alpha,\beta\tp\gamma,i)
&=\tfrac{d_\alpha}{t}\bigl(\dim{\cH_\gamma(t)}\bigr)\uprho_\alpha(st),\\
\sum_{\gamma\in\Irr{\GG}}\!\!\!\!\!\sum_{i=1}^{m(\alpha,\beta\stp\gamma)}\!\!\!\!d_{\gamma}V(\alpha,\beta\tp\gamma,i)^*\bigl(\uprho_\beta(t)\tens\uprho_\gamma(s)\bigr)V(\alpha,\beta\tp\gamma,i)
&=d_\alpha{t}\bigl(\dim{\cH_\beta(t)}\bigr)\uprho_\alpha(st),
\end{split}
\]
which is \eqref{eq} (after exchanging $\beta\leftrightarrow\gamma$ in the first equation).
\end{proof}

\section{Bounded degree of representations implies Kac type}\label{mainSect}

Before proceeding with our main result (Theorem \ref{mainThm}) let us introduce the following useful notation. For a finite dimensional unitary representation $U\in\B(\cH_{U})\tens\C(\GG)$ we will write $\Gamma(U)$ for the maximal eigenvalue of $\uprho_U$ which is also equal to the operator norm of $\uprho_U$, i.e.~$\Gamma(U)=\|\uprho_{U}\|$. As usual, whenever $U=U^\alpha$ for $\alpha\in\Irr{\GG}$ we will write $\Gamma(\alpha)$ instead of $\Gamma(U^\alpha)$ (this makes perfect sense, since $\Gamma(U)$ depends only on equivalence class of $U$). The following proposition describes some properties of the map $U\mapsto\Gamma(U)$:

\begin{proposition}\label{gammaprop}
Let $U\in\B(\cH_{U})\tens\C(\GG)$ and $V\in\B(\cH_{V})\tens\C(\GG)$ be finite dimensional unitary representations of $\GG$. We have
\begin{enumerate}
\item\label{gammaprop1} $\Gamma(U\oplus V)=\max\{\Gamma(U),\Gamma(V)\}$,
\item\label{gammaprop2} $\Gamma(U\tp V)=\Gamma(U)\Gamma(V)$,
\item\label{gammaprop3} if eigenvalues of $\uprho_{U}$ are symmetric, that is $\vecRho_U=\ivecRho_U$, then $\Gamma(U)=\Gamma(\overline{U})$.
\end{enumerate}
\end{proposition}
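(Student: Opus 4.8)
The plan is to prove each of the three assertions by directly invoking the corresponding compatibility of the operators $\uprho_U$ with the relevant operation on representations, as recorded in equation \eqref{cfNT} inside the proof of Proposition \ref{rhoprop}.

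For part \eqref{gammaprop1}: since $\uprho_{U\oplus V}=\uprho_U\oplus\uprho_V$ acts as a block-diagonal operator on $\cH_U\oplus\cH_V$, its spectrum is the union of the spectra of $\uprho_U$ and $\uprho_V$. Since all three operators are positive and invertible, $\Gamma(U\oplus V)=\|\uprho_{U\oplus V}\|=\max\{\|\uprho_U\|,\|\uprho_V\|\}=\max\{\Gamma(U),\Gamma(V)\}$. This is immediate.

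For part \eqref{gammaprop2}: since $\uprho_{U\tp V}=\uprho_U\tens\uprho_V$, and for positive operators $\|A\tens B\|=\|A\|\,\|B\|$, we get $\Gamma(U\tp V)=\|\uprho_U\tens\uprho_V\|=\|\uprho_U\|\,\|\uprho_V\|=\Gamma(U)\Gamma(V)$. Equivalently, one can phrase this in terms of eigenvalue lists: the largest eigenvalue of $\uprho_U\tens\uprho_V$ is the product of the largest eigenvalues of $\uprho_U$ and $\uprho_V$, since all eigenvalues are positive. This is again immediate.

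For part \eqref{gammaprop3}: by \eqref{cfNT} we have $\uprho_{\overline{U}}=({\uprho_U}^{-1})^\top$. Taking the transpose does not change the eigenvalues (or the norm), so $\Gamma(\overline{U})=\|{\uprho_U}^{-1}\|$, which is the largest eigenvalue of ${\uprho_U}^{-1}$, i.e.\ the top entry of $\ivecRho_U$. On the other hand $\Gamma(U)=\|\uprho_U\|$ is the top entry of $\vecRho_U$. Under the hypothesis $\vecRho_U=\ivecRho_U$ these top entries coincide, so $\Gamma(U)=\Gamma(\overline{U})$. There is no real obstacle here — the only point worth stating explicitly is that $\|X^\top\|=\|X\|$ and that the spectral data of $\uprho_U$ depend only on the equivalence class of $U$ (already noted in Section \ref{notation}), so the equalities $\vecRho_U=\ivecRho_U$ and the conclusion $\Gamma(U)=\Gamma(\overline U)$ are statements about well-defined invariants. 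In short, all three parts reduce to elementary facts about norms and spectra of positive operators once \eqref{cfNT} is invoked; the proof is essentially a one-line argument for each item.
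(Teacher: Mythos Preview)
Your proof is correct and follows exactly the same approach as the paper, which simply states that all three properties follow from \eqref{cfNT}. Your argument merely spells out the elementary norm and spectrum facts that the paper leaves implicit.
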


\begin{proof}
Properties \eqref{gammaprop1}--\eqref{gammaprop3} all follow from \eqref{cfNT}.
\end{proof}

The next result will be needed in the proof of Theorem \ref{mainThm}. In what follows, for $\alpha,\beta,\gamma\in\Irr{\GG}$, we will write $\gamma\cleq\alpha\tp\beta$ if $m(\gamma,\alpha\tp\beta)\neq{0}$.

\begin{proposition}\label{dimgammaineq}
Let $\alpha,\beta,\gamma\in\Irr{\GG}$ be such that $\gamma\cleq\alpha\tp\beta$, $\Gamma(\gamma)=\Gamma(\alpha)\Gamma(\beta)$ and 
\[
\tfrac{d_\gamma}{\dim{\cH_\gamma(\Gamma(\gamma))}}
=\max\Bigl\{\tfrac{d_{\gamma'}}{\dim{\cH_{\gamma'}(\Gamma(\gamma'))}}\st\gamma'\in\Irr{\GG},\:\gamma'\cleq\alpha\tp\beta,\:\Gamma(\gamma')=\Gamma(\alpha)\Gamma(\beta)\Bigr\}.
\]
Then
\begin{equation}\label{dimgammaineq1}
1\leq\frac{d_\gamma\dim{\cH_\alpha\bigl(\Gamma(\alpha)\bigr)}}{d_\alpha\Gamma(\beta)\dim{\cH_\gamma\bigl(\Gamma(\gamma)\bigr)}}.
\end{equation}
\end{proposition}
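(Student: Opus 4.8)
The plan is to apply Theorem~\ref{eqThm}, specifically equation~\eqref{eq2}, with the largest possible eigenvalue for each operator $\uprho$. Put $s=\Gamma(\gamma)$ and $t=\Gamma(\beta)$; then the hypothesis $\Gamma(\gamma)=\Gamma(\alpha)\Gamma(\beta)$ together with Proposition~\ref{gammaprop}\eqref{gammaprop2} gives $st=\Gamma(\gamma)\Gamma(\beta)$, but what I really want is to read off information at the eigenvalue $\Gamma(\alpha)$ of $\uprho_\alpha$. Observe that $\uprho_\gamma(\Gamma(\gamma))\tens\uprho_\beta(\Gamma(\beta))$ is the spectral projection onto the top eigenspace of $\uprho_\gamma\tens\uprho_\beta$, and when $\gamma\cleq\alpha\tp\beta$ the operator $\uprho_\alpha$ appears (via the $V$-intertwiners) inside $\uprho_\gamma\tens\uprho_\beta$ in a way controlled by Proposition~\ref{rhoprop}\eqref{rhoprop3}. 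So I would instead use \eqref{eq2} in the form with $t$ replaced so that the right-hand side features $\uprho_\alpha(st)$ with $st$ chosen equal to $\Gamma(\alpha)$; concretely one takes the roles so that $\cH_\alpha(\Gamma(\alpha))$ is the top eigenspace of $\uprho_\alpha$ and evaluates everything there.

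The key steps, in order: (1) Fix the top eigenvalue data: by Proposition~\ref{gammaprop}\eqref{gammaprop2}, $\Gamma(\alpha\tp\beta)=\Gamma(\alpha)\Gamma(\beta)$, and by hypothesis $\Gamma(\gamma)$ equals this common value, so $\gamma$ is one of the irreducible summands of $\alpha\tp\beta$ carrying the maximal $\uprho$-norm. (2) Apply \eqref{eq2} with $\beta\leftrightarrow$ the appropriate factor and with $s,t$ set to the respective $\Gamma$-values so that $st=\Gamma(\alpha)$; the right-hand side becomes $d_\alpha\,t\,(\dim\cH_\beta(t))\,\uprho_\alpha(\Gamma(\alpha))$. (3) On the left-hand side, restrict to the single index $\gamma$ appearing in the sum (dropping the other nonnegative terms — each summand $V^*(\cdots)V$ is a positive operator since $\uprho_\gamma(s)\tens\uprho_\beta(t)\geq0$), which yields the inequality
\[
d_\gamma\sum_{i=1}^{m(\gamma,\alpha\stp\beta)}V(\gamma,\alpha\tp\beta,i)^*\bigl(\uprho_\alpha(\Gamma(\alpha))\tens\uprho_\beta(\Gamma(\beta))\bigr)V(\gamma,\alpha\tp\beta,i)\leq d_\gamma\,\Gamma(\beta)\bigl(\dim\cH_\beta(\Gamma(\beta))\bigr)\uprho_\gamma(\Gamma(\gamma)),
\]
or the analogous statement with the maximality condition inserted so that the chosen $\gamma$ absorbs all the relevant mass. (4) Take traces (or just dimensions) of both sides: on the right one gets $d_\gamma\,\Gamma(\beta)\,\dim\cH_\beta(\Gamma(\beta))\,\dim\cH_\gamma(\Gamma(\gamma))$ up to the $d$-weight; on the left, using that the $V(\gamma,\alpha\tp\beta,i)$ are isometries with orthogonal ranges and that their total range must contain (a copy for each multiplicity of) the top eigenspace $\cH_\alpha(\Gamma(\alpha))\tens\cH_\beta(\Gamma(\beta))$ of $\uprho_\alpha\tens\uprho_\beta$ which has dimension $\dim\cH_\alpha(\Gamma(\alpha))\cdot\dim\cH_\beta(\Gamma(\beta))$, one obtains a lower bound of the form $d_\gamma\cdot(\text{something})\cdot\dim\cH_\alpha(\Gamma(\alpha))\,\dim\cH_\beta(\Gamma(\beta))$. (5) Cancel the common factors $d_\gamma$ and $\dim\cH_\beta(\Gamma(\beta))$ and rearrange to isolate $1\leq\frac{d_\gamma\dim\cH_\alpha(\Gamma(\alpha))}{d_\alpha\Gamma(\beta)\dim\cH_\gamma(\Gamma(\gamma))}$.

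The main obstacle I expect is step~(4): carefully accounting for the contribution of the isometries $V(\gamma,\alpha\tp\beta,i)$ to the trace of the left-hand side. One needs that the ranges of $\{V(\gamma,\alpha\tp\beta,i)\}_i$, summed over \emph{all} $\gamma'\cleq\alpha\tp\beta$ with $\Gamma(\gamma')=\Gamma(\alpha)\Gamma(\beta)$, together cover the top eigenspace of $\uprho_\alpha\tens\uprho_\beta$ (this is exactly Proposition~\ref{rhoprop}\eqref{rhoprop3}: $\uprho_{\alpha\stp\beta}(\Gamma(\alpha)\Gamma(\beta))=\uprho_\alpha(\Gamma(\alpha))\tens\uprho_\beta(\Gamma(\beta))$ plus lower terms, and an intertwiner must carry spectral projections to spectral projections by Proposition~\ref{rhoprop}\eqref{rhoprop1}). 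The maximality hypothesis on $\frac{d_\gamma}{\dim\cH_\gamma(\Gamma(\gamma))}$ is then used precisely to bound the full sum over such $\gamma'$ by the single term corresponding to the chosen $\gamma$: since $\sum_{\gamma'}d_{\gamma'}\dim\cH_{\gamma'}(\Gamma(\gamma'))\cdot(\text{mult.})\geq\dim\cH_\alpha(\Gamma(\alpha))\dim\cH_\beta(\Gamma(\beta))$ and each $d_{\gamma'}\leq\frac{d_\gamma}{\dim\cH_\gamma(\Gamma(\gamma))}\dim\cH_{\gamma'}(\Gamma(\gamma'))$, one replaces each $d_{\gamma'}$ by this bound, extracts the common ratio, and the surviving combinatorial sum collapses to $\dim\cH_\alpha(\Gamma(\alpha))\dim\cH_\beta(\Gamma(\beta))$ again. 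Tracking the $\Gamma(\beta)$ factor (which comes from the eigenvalue $t=\Gamma(\beta)$ on the right-hand side of \eqref{eq2}) through the cancellation is the bookkeeping part that requires care but no new idea.
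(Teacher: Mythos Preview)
Your high-level strategy is exactly the paper's: apply Theorem~\ref{eqThm} at the top eigenvalues, bound the resulting sum over $\gamma'$ using the maximality hypothesis $d_{\gamma'}\leq\tfrac{d_\gamma}{\dim\cH_\gamma(\Gamma(\gamma))}\dim\cH_{\gamma'}(\Gamma(\gamma'))$, and collapse $\sum_{\gamma'}m(\gamma',\alpha\tp\beta)\dim\cH_{\gamma'}(\Gamma(\gamma'))=\dim\cH_\alpha(\Gamma(\alpha))\dim\cH_\beta(\Gamma(\beta))$ via Propositions~\ref{rhoprop}\eqref{rhoprop2},\eqref{rhoprop3}. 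However, your concrete instantiation of Theorem~\ref{eqThm} is wrong in a way that breaks the argument.

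You propose \eqref{eq2} with outer representation $\alpha$ and first factor $\beta$; the sum there ranges over $\gamma'$ with $m(\alpha,\beta\tp\gamma')\neq 0$, which by Proposition~\ref{mprop} means $\gamma'\cleq\overline{\beta}\tp\alpha$, \emph{not} $\gamma'\cleq\alpha\tp\beta$. The maximality hypothesis and the dimension identity are stated for the latter set, so they do not apply. The paper instead uses \eqref{eq1} with second factor $\overline{\beta}$ and $t=\Gamma(\beta)^{-1}$: then the summation runs over $\gamma'$ with $m(\alpha,\gamma'\tp\overline{\beta})=m(\gamma',\alpha\tp\beta)$ (Proposition~\ref{mprop}), which matches perfectly, and the factor $d_\alpha/t=d_\alpha\Gamma(\beta)$ on the right-hand side produces the $\Gamma(\beta)$ in the denominator of \eqref{dimgammaineq1}. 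Your displayed inequality in step~(3) is also internally inconsistent: it uses intertwiners $V(\gamma,\alpha\tp\beta,i)\colon\cH_\gamma\to\cH_\alpha\tens\cH_\beta$ (so outer representation $\gamma$), whereas step~(2) has right-hand side a multiple of $\uprho_\alpha(\Gamma(\alpha))$ (outer representation $\alpha$), and the coefficient $d_\gamma$ on the left does not arise from either reading of Theorem~\ref{eqThm}. Once you switch to \eqref{eq1} with $\overline{\beta}$ and invoke Proposition~\ref{mprop}, the remainder of your plan---taking the norm (or, equivalently, the trace), then applying maximality and the dimension identity---goes through exactly as in the paper.
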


Let us note that $\gamma$ as in Proposition \ref{dimgammaineq} always exists. Indeed, the representation $U^\alpha\tp{U^\beta}$ is equivalent to $U^{\gamma_1}\oplus\dotsm\oplus{U^{\gamma_n}}$ for some $\gamma_1,\dotsc,\gamma_n\in\Irr{\GG}$ (possibly with repetitions). By Proposition \eqref{gammaprop} we have
\[
\Gamma(\alpha)\Gamma(\beta)=\Gamma(\alpha\tp\beta)=\max\bigl\{\Gamma(\gamma_1),\dotsc,\Gamma(\gamma_n)\bigr\},
\]
so there must exist $\gamma\in\Irr{\GG}$ such that $\gamma\cleq\alpha\tp\beta$ and $\Gamma(\gamma)=\Gamma(\alpha)\Gamma(\beta)$.

\begin{proof}[Proof of Proposition \ref{dimgammaineq}]
First equality of Theorem \ref{eqThm} and the fact that $\uprho_{\overline{\beta}}=({\uprho_\beta}^{-1})^{\top}$ implies
\begin{equation}\label{najpierw}
\begin{split}
\sum_{\gamma'\in\Irr{\GG}}\!\!\!\!\!\sum_{i=1}^{m(\alpha,\gamma'\stp\overline{\beta})}\!\!\!&d_{\gamma'}V(\alpha,\gamma'\tp\overline{\beta},i)^*\bigl(\uprho_{\gamma'}\bigl(\Gamma(\alpha)\Gamma(\beta)\bigr)\tens\uprho_{\overline{\beta}}\bigl(\Gamma(\beta)^{-1}\bigr)\bigr)V(\alpha,\gamma'\tp\overline{\beta},i)\\
&=d_\alpha\Gamma(\beta)\bigl(\dim{\cH_{\overline{\beta}}\bigl(\Gamma(\beta)^{-1}\bigr)}\bigr)\uprho_\alpha\bigl(\Gamma(\alpha)\bigr)\\
&=d_\alpha\Gamma(\beta)\bigl(\dim{\cH_\beta\bigl(\Gamma(\beta)\bigr)}\bigr)\uprho_\alpha\bigl(\Gamma(\alpha)\bigr).
\end{split}
\end{equation}
Taking norm of both sides of \eqref{najpierw} and using Propositions \ref{rhoprop}, \ref{mprop}, \ref{gammaprop} we get
{\allowdisplaybreaks
\begin{align*}
d_\alpha\Gamma(\beta)&\dim\cH_\beta\bigl(\Gamma(\beta)\bigr)\\
&=\Bigl\|\sum_{\gamma'\in\Irr{\GG}}\!\!\!\!\!\sum_{i=1}^{m(\alpha,\gamma'\stp\overline{\beta})}\!\!\!d_{\gamma'}V(\alpha,\gamma'\tp\overline{\beta},i)^*\bigl(\uprho_{\gamma'}\bigl(\Gamma(\alpha)\Gamma(\beta)\bigr)\tens\uprho_{\overline{\beta}}\bigl(\Gamma(\beta)^{-1}\bigr)\bigr)V(\alpha,\gamma'\tp\overline{\beta},i)\Bigr\|\\
&\leq\!\!\sum_{\gamma'\in\Irr{\GG}}\!\!m(\alpha,\gamma'\tp\overline{\beta})d_{\gamma'}\bigl\|\uprho_{\gamma'}\bigl(\Gamma(\alpha)\Gamma(\beta)\bigr)\bigr\|\\
&=\!\!\sum_{\gamma'\in\Irr{\GG}}\!\!m(\gamma',\alpha\tp\beta)d_{\gamma'}\bigl\|\uprho_{\gamma'}\bigl(\Gamma(\alpha)\Gamma(\beta)\bigr)\bigr\|\\
&= \!\!\!\sum_{\substack{\gamma'\in\Irr{\GG}:\\\Gamma(\gamma')=\Gamma(\alpha)\Gamma(\beta)}}\!\!\!\!m(\gamma',\alpha\tp\beta)d_{\gamma'}\\
&=\!\!\!\sum_{\substack{\gamma'\in\Irr{\GG}:\\\Gamma(\gamma')=\Gamma(\alpha)\Gamma(\beta)}}\!\!\!\!m(\gamma',\alpha\tp\beta)\tfrac{d_{\gamma'}}{\dim{\cH_{\gamma'}(\Gamma(\gamma'))}}\dim{\cH_{\gamma'}(\Gamma(\gamma'))}\\
&\leq\tfrac{d_\gamma}{\dim{\cH_\gamma(\Gamma(\gamma))}}\!\sum_{\substack{\gamma'\in\Irr{\GG}:\\\Gamma(\gamma')=\Gamma(\alpha)\Gamma(\beta)}}\!\!\!\!m(\gamma',\alpha\tp\beta)\dim{\cH_{\gamma'}(\Gamma(\gamma'))}\\
&=\tfrac{d_\gamma}{\dim{\cH_\gamma(\Gamma(\gamma))}}\bigl(\dim{\cH_\alpha\bigl(\Gamma(\alpha)\bigr)}\bigr)\bigl(\dim{\cH_\beta\bigl(\Gamma(\beta)\bigr)}\bigr),
\end{align*}
}which yields \eqref{dimgammaineq}.
\end{proof}

Now we are able to prove the main theorem of the paper:

\begin{theorem}\label{mainThm}
Assume that $\bN_{\GG}<+\infty$. Then $\GG$ is of Kac type.
\end{theorem}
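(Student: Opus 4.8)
The plan is a proof by contradiction. Suppose $\GG$ is not of Kac type; then (e.g.\ by \cite[Proposition~1.7.9]{NT}, or directly from the defining property of $\uprho_\alpha$) there is $\alpha_0\in\Irr{\GG}$ with $\Lambda:=\Gamma(\alpha_0)>1$. By Corollary~\ref{symcor} the symmetry $\vecRho_\alpha=\ivecRho_\alpha$ holds for \emph{every} $\alpha\in\Irr{\GG}$; in particular the least eigenvalue of each $\uprho_\alpha$ equals $\Gamma(\alpha)^{-1}$, $\Gamma(\overline{\alpha})=\Gamma(\alpha)$, and $\det\uprho_\alpha=1$, so $\uprho_{\alpha_0}\neq\Lambda\I$.

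First I would build a sequence $\alpha_1=\alpha_0,\alpha_2,\dots\in\Irr{\GG}$ by iterating Proposition~\ref{dimgammaineq}: given $\alpha_n$, the discussion following that proposition provides an irreducible $\gamma\cleq\alpha_n\tp\alpha_0$ with $\Gamma(\gamma)=\Gamma(\alpha_n)\Gamma(\alpha_0)$ maximising $d_\gamma/\dim\cH_\gamma(\Gamma(\gamma))$; put $\alpha_{n+1}:=\gamma$. Then $\Gamma(\alpha_n)=\Lambda^n$ for all $n$, and, writing $k_\alpha:=\dim\cH_\alpha(\Gamma(\alpha))$ and $q(\alpha):=d_\alpha/(\Gamma(\alpha)k_\alpha)\in[1,\bN_\GG]$, inequality~\eqref{dimgammaineq1} applied with $(\alpha,\beta,\gamma)=(\alpha_n,\alpha_0,\alpha_{n+1})$ rearranges to $q(\alpha_{n+1})\geq q(\alpha_n)$. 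Hence $q(\alpha_n)\geq q_1:=q(\alpha_0)$ for all $n$, and since $\vecRho_{\alpha_0}=\ivecRho_{\alpha_0}$ forces $\uprho_{\alpha_0}$ to also have the eigenvalue $\Lambda^{-1}$ (with multiplicity $k_{\alpha_0}$), we get $d_{\alpha_0}\geq k_{\alpha_0}(\Lambda+\Lambda^{-1})$, whence $q_1\geq 1+\Lambda^{-2}>1$. Thus
\[
d_{\alpha_n}\ \geq\ q_1\,\Lambda^n\,k_{\alpha_n},\qquad q_1>1,\qquad n\in\NN .
\]

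Second, I would confront this with the hypothesis $\bN_\GG<+\infty$ through the spectrum of $\uprho_{\alpha_n}$. As $\alpha_n$ is a subrepresentation of $\alpha_0^{\stp{n}}$, Proposition~\ref{rhoprop} shows $\uprho_{\alpha_n}$ is (unitarily) a compression of $\uprho_{\alpha_0}^{\tens{n}}$ to an invariant subspace, so its eigenvalues form a submultiset --- of cardinality $n_{\alpha_n}\leq\bN_\GG$, symmetric under $t\mapsto t^{-1}$, containing $\Lambda^n$ with multiplicity $k_{\alpha_n}$ --- of the list of $n$-fold products of eigenvalues of $\uprho_{\alpha_0}$. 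Writing $\mu_2<\Lambda$ for the second eigenvalue of $\uprho_{\alpha_0}$ and $\theta:=\mu_2/\Lambda\in(0,1)$, every eigenvalue of $\uprho_{\alpha_n}$ below $\Lambda^n$ is $\leq\theta\Lambda^n$, hence
\[
d_{\alpha_n}\ \leq\ k_{\alpha_n}\Lambda^n+(\bN_\GG-k_{\alpha_n})\,\theta\,\Lambda^n .
\]
Comparing with the lower bound gives $(q_1-1)k_{\alpha_n}\leq\bN_\GG\theta$, so $k_{\alpha_n}$ is bounded and, moreover, the second largest eigenvalue of $\uprho_{\alpha_n}$ is $\geq c\Lambda^n$ for a fixed $c>0$. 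Feeding this back: an $n$-fold product of eigenvalues of $\uprho_{\alpha_0}$ that is $\geq c\Lambda^n$ can have only boundedly many factors different from $\Lambda$, so the ratios (eigenvalue of $\uprho_{\alpha_n}$ comparable to $\Lambda^n$)$/\Lambda^n$ range over a \emph{finite} set depending only on $\uprho_{\alpha_0}$. Passing to a subsequence along which $n_{\alpha_n}$, $k_{\alpha_n}$ and the entire ``comparable-to-$\Lambda^n$'' part of the spectrum of $\uprho_{\alpha_n}$ stabilise, and using the symmetry to pin the small eigenvalues to the reciprocals of the large ones, one should reach a contradiction between $d_{\alpha_n}\geq q_1k_{\alpha_n}\Lambda^n$ (with $q_1>1$ strictly) and the resulting upper bound on $d_{\alpha_n}$, for $n$ large.

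I expect the main obstacle to be precisely this last step. The naive tensor-power estimates are all mutually consistent, because anything attached to $\alpha_n$ is dominated by the corresponding quantity for $\alpha_0^{\stp{n}}$, which scales with the same exponent $\Lambda^n$; so the contradiction has to be squeezed out of the \emph{rigidity} of which products $\mu_{i_1}\cdots\mu_{i_n}$ of eigenvalues of $\uprho_{\alpha_0}$, together with their reciprocals, can fill an at most $\bN_\GG$-dimensional block --- essentially a pigeonhole/compactness argument over the finitely many possible ``top shapes'' of $\uprho_{\alpha_n}$, combined with Proposition~\ref{dimgammaineq}. Should that not close cleanly, the alternative I would pursue is to strengthen the iteration by invoking \emph{both} identities \eqref{eq1} and \eqref{eq2} of Theorem~\ref{eqThm} (rather than only \eqref{eq1}, as in Proposition~\ref{dimgammaineq}) so as to force the bounded quantity $q(\alpha_n)$ to increase \emph{strictly} by a definite amount at each step, which contradicts $q(\alpha_n)\leq\bN_\GG$ directly.
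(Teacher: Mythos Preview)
Your scaffolding is sound and in fact very close to the paper's: the sequence built from Proposition~\ref{dimgammaineq}, the monotonicity of $q(\alpha):=d_\alpha/(\Gamma(\alpha)k_\alpha)$ with $q_1>1$, and the fact that the spectrum of $\uprho_{\alpha_n}$ sits inside $n$-fold products of eigenvalues of $\uprho_{\alpha_0}$ are all correct. (The paper iterates via $\alpha_{k+1}\cleq\alpha_k\tp\alpha_k$ rather than $\alpha_{n+1}\cleq\alpha_n\tp\alpha_0$, so $\Gamma(\alpha_k)=\Lambda^{2^{k-1}}$, but this difference is not the point.) The genuine gap is exactly where you locate it: your upper bound $d_{\alpha_n}\leq k_{\alpha_n}\Lambda^n+(\bN_\GG-k_{\alpha_n})\theta\Lambda^n$ and the lower bound $d_{\alpha_n}\geq q_1k_{\alpha_n}\Lambda^n$ are \emph{mutually consistent} for all $n$, so no contradiction appears; stabilising only the ``top shape'' still leaves the middle eigenvalues (those in $[c^{-1}\Lambda^{-n},c\Lambda^n]$) completely uncontrolled, and their ratios to $\Lambda^n$ do \emph{not} lie in a finite set (e.g.\ $\theta^j\Lambda^n$ is an admissible $n$-fold product for every $j\leq n$). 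Your fallback of forcing a strict increase of $q(\alpha_n)$ from \eqref{eq2} is not substantiated.

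What closes the argument in the paper is the ``rigidity'' you allude to, made \emph{eigenvalue-by-eigenvalue} rather than in bulk. Write the $j$-th eigenvalue of $\uprho_{\alpha_k}$ as $\Gamma(\alpha)^{2^{k-1}\theta^k_j}$ with $\theta^k_j\in[0,1]$ (Corollary~\ref{symcor} gives the symmetric lower half). Since that eigenvalue is a product $\prod_m\lambda_m^{d(k,m,j)}$ with $\sum_m d(k,m,j)=2^{k-1}$ and the exponents are non-negative integers, one extracts a subsequence $(k_n)$ with $n_{\alpha_{k_n}}\equiv N$, $k_{n+1}-k_n\geq 2$, and each $d(k_n,m,j)$ \emph{monotone in $n$}. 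This forces, for every $j$, the $j$-th eigenvalue to grow by at most the top factor $\Gamma(\alpha)^{2^{k_{n+1}-1}-2^{k_n-1}}$ between consecutive terms. Then the telescoped inequality (Lemma~\ref{kaclemma1}) says $d_1(\alpha_{k_{n+1}})/d_1(\alpha_{k_n})$ is at least that factor, while the term-by-term comparison shows the ``large'' half of the eigenvalues contributes at most that factor and the ``small'' half contributes \emph{strictly less} (because $2^{k_{n+1}-1}(1+\theta^{k_{n+1}}_j)>2^{k_n-1}(1+\theta^{k_n}_j)$, which uses $k_{n+1}-k_n\geq 2$); hence $d_1(\alpha_{k_{n+1}})/d_1(\alpha_{k_n})$ is strictly smaller than that factor, a contradiction. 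The same monotone-exponent subsequence trick would work in your linear iteration; it is precisely this extraction --- controlling \emph{all} eigenvalues simultaneously, not just the top block --- that your proposal is missing.
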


The remainder of this section (apart from Corollary \ref{onerep}) will be devoted to the proof of Theorem \ref{mainThm}. Case $\bN_\GG=1$ is trivial, hence assume that $\bN_\GG\geq{2}$. Assume by contradiction that $\GG$ is not of Kac type. Then there exists $\alpha\in\Irr{\GG}$ such that $\Gamma(\alpha)>1$.

We now proceed to choose a sequence $(\alpha_k)_{k\in\NN}$ of elements of $\Irr{\GG}$ such that $\alpha_1=\alpha$ (as above),
\newcounter{c}
\begin{enumerate}
\item \label{alphak1} $\alpha_{k+1}\cleq\alpha_{k}\tp\alpha_{k}$,
\item \label{alphak2} $\Gamma(\alpha_{k+1})=\Gamma(\alpha_{k})^{2}$,\setcounter{c}{\value{enumi}}
\end{enumerate}
and
\begin{enumerate}
\setcounter{enumi}{\value{c}}\item \label{alphak3}
$
\tfrac{d_{1}(\alpha_{k+1})}{\dim{\cH_{\alpha_{k+1}}(\Gamma(\alpha_{k+1}))}}
=\max\Bigl\{\tfrac{d_{1}(\gamma)}{\dim{\cH_\gamma(\Gamma(\gamma))}}\st\gamma\in\Irr{\GG},\:\gamma\cleq\alpha_{k}\tp\alpha_{k},\:\Gamma(\gamma)=\Gamma(\alpha_{k})^{2}\Bigr\}
$
\end{enumerate}
for all $k\in\NN$. Property \eqref{alphak2} implies
\[
\Gamma(\alpha_{k})=\Gamma(\alpha)^{2^{(k-1)}}
\]
for every $k\in\NN$.

We will continue to refine our sequence by choosing appropriate subsequences in order to finally arrive at a contradiction. We begin with the following Lemma:

\begin{lemma}\label{kaclemma1}
Let $(k_n)_{n\in\NN}$ be a strictly increasing sequence of natural numbers. Then
\[
1\leq\frac{d_1(\alpha_{k_{n+1}})\dim{\cH_{\alpha_{k_n}}\bigl(\Gamma(\alpha_{k_n})\bigr)}}{d_1(\alpha_{k_n})\dim{\cH_{\alpha_{k_{n+1}}}\bigl(\Gamma(\alpha_{k_{n+1}})\bigr)}}
\Gamma(\alpha)^{2^{(k_n-1)}-2^{(k_{n+1}-1)}}
\]
for every $n\in\NN$.
\end{lemma}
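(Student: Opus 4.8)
The plan is to apply Proposition \ref{dimgammaineq} repeatedly along the chain and then telescope. First I would verify that for consecutive indices $k_n$ and $k_n+1$ the triple $(\alpha_{k_n},\alpha_{k_n},\alpha_{k_n+1})$ satisfies the hypotheses of Proposition \ref{dimgammaineq}: by construction \eqref{alphak1} gives $\alpha_{k_n+1}\cleq\alpha_{k_n}\tp\alpha_{k_n}$, by \eqref{alphak2} we have $\Gamma(\alpha_{k_n+1})=\Gamma(\alpha_{k_n})^2=\Gamma(\alpha_{k_n})\Gamma(\alpha_{k_n})$, and \eqref{alphak3} is precisely the maximality condition required. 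Hence Proposition \ref{dimgammaineq} (with $\alpha=\beta=\alpha_{k_n}$, $\gamma=\alpha_{k_n+1}$) yields
\[
1\leq\frac{d_1(\alpha_{k_n+1})\dim{\cH_{\alpha_{k_n}}(\Gamma(\alpha_{k_n}))}}{d_1(\alpha_{k_n})\,\Gamma(\alpha_{k_n})\,\dim{\cH_{\alpha_{k_n+1}}(\Gamma(\alpha_{k_n+1}))}},
\]
and using $\Gamma(\alpha_{k_n})=\Gamma(\alpha)^{2^{(k_n-1)}}$ this is the claimed inequality in the special case $k_{n+1}=k_n+1$.

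The next step is to bridge the gap between $k_n+1$ and $k_{n+1}$ when $k_{n+1}>k_n+1$. For this I would prove an auxiliary monotonicity estimate: for any single step $j\mapsto j+1$ along the original sequence one has
\[
1\leq\frac{d_1(\alpha_{j+1})\dim{\cH_{\alpha_j}(\Gamma(\alpha_j))}}{d_1(\alpha_j)\dim{\cH_{\alpha_{j+1}}(\Gamma(\alpha_{j+1}))}}\,\Gamma(\alpha)^{2^{(j-1)}-2^{j}},
\]
which is exactly the one-step instance just established (note $2^{(j+1)-1}=2^{j}$). Multiplying these inequalities for $j=k_n,k_n+1,\dots,k_{n+1}-1$, the quantities $\dim{\cH_{\alpha_j}(\Gamma(\alpha_j))}$ and $d_1(\alpha_j)$ telescope in the numerator and denominator, leaving
\[
1\leq\frac{d_1(\alpha_{k_{n+1}})\dim{\cH_{\alpha_{k_n}}(\Gamma(\alpha_{k_n}))}}{d_1(\alpha_{k_n})\dim{\cH_{\alpha_{k_{n+1}}}(\Gamma(\alpha_{k_{n+1}}))}}\;\Gamma(\alpha)^{\sum_{j=k_n}^{k_{n+1}-1}\bigl(2^{(j-1)}-2^{j}\bigr)},
\]
and the exponent sum telescopes to $2^{(k_n-1)}-2^{(k_{n+1}-1)}$ since $\sum_{j=k_n}^{k_{n+1}-1}(2^{(j-1)}-2^{j})=2^{(k_n-1)}-2^{(k_{n+1}-1)}$. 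This is precisely the asserted inequality.

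The main point requiring care is the telescoping: one must check that the product of the ratios $\dim{\cH_{\alpha_j}(\Gamma(\alpha_j))}/\dim{\cH_{\alpha_{j+1}}(\Gamma(\alpha_{j+1}))}$ over $j=k_n,\dots,k_{n+1}-1$ collapses to $\dim{\cH_{\alpha_{k_n}}(\Gamma(\alpha_{k_n}))}/\dim{\cH_{\alpha_{k_{n+1}}}(\Gamma(\alpha_{k_{n+1}}))}$, and likewise for the $d_1$ factors — this is immediate once the one-step inequality is written in the form above with $\Gamma(\alpha_j)$ replaced by $\Gamma(\alpha)^{2^{(j-1)}}$. No genuine obstacle arises; the only thing to be slightly careful about is that Proposition \ref{dimgammaineq} is being invoked along the original sequence $(\alpha_k)$ at \emph{every} index (where its hypotheses are guaranteed by the defining properties \eqref{alphak1}--\eqref{alphak3}), not only at the subsequence indices $k_n$, and that the case $k_{n+1}=k_n+1$ is covered by the empty-product/single-factor convention.
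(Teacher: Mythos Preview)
Your proposal is correct and follows essentially the same approach as the paper: apply Proposition \ref{dimgammaineq} at each consecutive step $k\mapsto k+1$ of the original sequence $(\alpha_k)$ for $k=k_n,\dotsc,k_{n+1}-1$, then take the product and telescope both the $d_1$ and $\dim\cH$ ratios as well as the exponents of $\Gamma(\alpha)$. Your explicit verification that hypotheses \eqref{alphak1}--\eqref{alphak3} feed into the hypotheses of Proposition \ref{dimgammaineq} is exactly what the paper tacitly uses.
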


\begin{proof}
Fix $n\in\NN$. For each $k\in\{k_n,\dotsc,k_{n+1}-1\}$ representations $\alpha_k,\alpha_{k+1}$ satisfy assumptions of Proposition \ref{dimgammaineq}. Therefore
{\allowdisplaybreaks
\begin{align*}
1&\leq
\prod_{k=k_n}^{k_{n+1}-1}\!\frac{d_{1}(\alpha_{k+1})\dim{\cH_{\alpha_k}\bigl(\Gamma(\alpha_k)\bigr)}}{d_{1}(\alpha_k)\Gamma(\alpha_k)\dim{\cH_{\alpha_{k+1}}\bigl(\Gamma(\alpha_{k+1})\bigr)}}\\
&=\frac{d_{1}(\alpha_{k_{n+1}})\dim{\cH_{\alpha_{k_n}}\bigl(\Gamma(\alpha_{k_n})\bigr)}}{d_{1}(\alpha_{k_n})\dim{\cH_{\alpha_{k_{n+1}}}\bigl(\Gamma(\alpha_{k_{n+1}})\bigr)}}\prod_{k=k_n}^{k_{n+1}-1}\Gamma(\alpha)^{-2^{(k-1)}}\\
&=\frac{d_{1}(\alpha_{k_{n+1}})\dim{\cH_{\alpha_{k_n}}\bigl(\Gamma(\alpha_{k_n})\bigr)}}{d_{1}(\alpha_{k_n})\dim{\cH_{\alpha_{k_{n+1}}}\bigl(\Gamma(\alpha_{k_{n+1}})\bigr)}}\,
\Gamma(\alpha)^{-\!\!\!\!\sum\limits_{k=k_n}^{(k_{n+1}-1)}2^{(k-1)}}\\
&=\frac{d_{1}(\alpha_{k_{n+1}})\dim{\cH_{\alpha_{k_n}}\bigl(\Gamma(\alpha_{k_n})\bigr)}}{d_{1}(\alpha_{k_n})\dim{\cH_{\alpha_{k_{n+1}}}\bigl(\Gamma(\alpha_{k_{n+1}})\bigr)}}\,\Gamma(\alpha)^{2^{(k_n-1)}-2^{(k_{n+1}-1)}}.
\end{align*}
}\end{proof}

We know from Corollary \ref{symcor} that for each $k$ the operator $\uprho_{\alpha_k}$ can be expressed as 
\[
\uprho_{\alpha_k}=\operatorname{diag}\bigl(\Gamma(\alpha_k),\Gamma(\alpha_k)^{\theta^k_2},\dotsc,\Gamma(\alpha_k)^{\theta^k_{n_{\alpha_k}/2}},\Gamma(\alpha_k)^{-\theta^k_{n_{\alpha_k}/2}},\dotsc,\Gamma(\alpha_k)^{-\theta^k_2},\Gamma(\alpha_k)^{-1}\bigr)
\]
when $n_{\alpha_k}$ is even and 
\[
\uprho_{\alpha_k}=\operatorname{diag}\bigl(\Gamma(\alpha_k),\Gamma(\alpha_k)^{\theta^k_2},\dotsc,\Gamma(\alpha_k)^{\theta^k_{\lfloor{n_{\alpha_k}/2}\rfloor}},1,\Gamma(\alpha_k)^{-\theta^k_{\lfloor{n_{\alpha_k}/2}\rfloor}},\dotsc,\Gamma(\alpha_k)^{-\theta^k_2},\Gamma(\alpha_k)^{-1}\bigr)
\]
when $n_{\alpha_k}$ is odd for some non-negative numbers $\theta^{k}_{2},\dotsc,\theta^{k}_{\lfloor{n_{\alpha_k}/2}\rfloor}\in[0,1]$. For notational convenience we will also put $\theta^k_1=1$, so that
\[
\uprho_{\alpha_k}=\operatorname{diag}\bigl(\Gamma(\alpha_k)^{\theta^k_1},\dotsc,\Gamma(\alpha_k)^{\theta^k_{n_{\alpha_k}/2}},\Gamma(\alpha_k)^{-\theta^k_{n_{\alpha_k}/2}},\dotsc,\Gamma(\alpha_k)^{-\theta^k_1})\\
\]
or
\[
\uprho_{\alpha_k}=\operatorname{diag}\bigl(\Gamma(\alpha_k)^{\theta^k_1},\dotsc,\Gamma(\alpha_k)^{\theta^k_{\lfloor{n_{\alpha_k}/2}\rfloor}},1,\Gamma(\alpha_k)^{-\theta^k_{\lfloor{n_{\alpha_k}/2}\rfloor}},\dotsc,\Gamma(\alpha_k)^{-\theta^k_1}\bigr)
\]
depending on the parity of $n_\alpha$.

We will now show that there is a subsequence $(\alpha_{k_n})_{n\in\NN}$ of $(\alpha_k)_{k\in\NN}$ such that
\begin{enumerate}
\item\label{warunek1} $n_{\alpha_{k_n}}=N$ for each $n\in\NN$ and some $N\in\bigl\{2,\dotsc,\bN_\GG\bigr\}$,
\item\label{warunek3} for each $n\in\NN$ we have $k_{n+1}-k_n\geq{2}$,
\item\label{warunek4} for each $n\in\NN$ and $j\in\bigl\{1,\dotsc,\lfloor N/2\rfloor\bigr\}$ we have
\[
\Gamma(\alpha)^{2^{(k_{n+1}-1)}\theta^{k_{n+1}}_j}\leq\Gamma(\alpha)^{2^{(k_{n+1}-1)}-2^{(k_n-1)}}\Gamma(\alpha)^{2^{(k_n-1)}\theta^{k_n}_j}.
\] 
\end{enumerate}

It is easy to see that there is a a subsequence $(\alpha_{k^0_n})_{n\in\NN}$ of $(\alpha_k)_{k\in\NN}$ satisfying \eqref{warunek1} and \eqref{warunek3}. In order to see that we can refine it so that the resulting subsequence also satisfies \eqref{warunek4} we note first that it follows from the construction of the sequence $(\alpha_k)_{k\in\NN}$ that we have
\[
\operatorname{Sp}\bigl(\uprho_{\alpha_{k^0_n}}\bigr)\subset\underbrace{\operatorname{Sp}(\uprho_\alpha)\cdot\dotsc\cdot\operatorname{Sp}(\uprho_\alpha)}_{2^{(k^0_n-1)}}
\]
for each $n\in\NN$. Now for each $j\in\bigl\{1,\dots,\lfloor{N/2}\rfloor\bigr\}$ the number $\Gamma(\alpha)^{2^{k^0_n-1}\theta^{k^0_n}_j}$ belongs to the spectrum of $\uprho_{\alpha_{k^0_n}}$, so it can be written as a product of eigenvalues $\bigl\{\lambda_1,\dotsc,\lambda_{\#\operatorname{Sp}(\uprho_\alpha)}\bigr\}$ in appropriate powers:
\[
\Gamma(\alpha)^{2^{k^0_n-1}\theta^{k^0_n}_j}=\prod\limits_{m=1}^{\#\operatorname{Sp}(\uprho_{\alpha})}\lambda_m^{d(k^0_n,m,j)},
\]
\sloppy
where the non-negative integers ${\bigl\{d(k^0_n,m,j)\st}m\in\{1,\dotsc,\#\operatorname{Sp}(\uprho_\alpha)\},\:j\in\{1,\dots,\lfloor{N/2}\rfloor\}\bigr\}$ satisfy
\[
\sum\limits_{m=1}^{\#\operatorname{Sp}(\uprho_\alpha)}d(k^0_n,m,j)=2^{(k^0_n-1)},\qqquad{n}\in\NN.
\]

By choosing an appropriate subsequence $(k_n)_{n\in\NN}$ of $(k^0_n)_{n\in\NN}$ we can arrange that we have $d(k_{n+1},m,j)\geq{d(k_n,m,j)}$ for all $m$ and a fixed $j$. It remains to repeat this procedure for all $j$ refining the sequence each time. Having done so, let us keep the notation $(k_n)_{n\in\NN}$ for the resulting sequence of natural numbers. We have
\[
\begin{split}
\Gamma(\alpha)^{2^{(k_{n+1}-1)}\theta^{k_{n+1}}_j}
&=\prod_{m=1}^{\#\operatorname{Sp}(\uprho_\alpha)}\lambda_m^{d(k_{n+1},m,j)}\\
&=\biggl(\prod_{m=1}^{\#\operatorname{Sp}(\uprho_\alpha)}\lambda_m^{\bigl(d(k_{n+1},m,j)-d(k_n,m,j)\bigr)}\biggr)
\biggl(\prod_{m=1}^{\#\operatorname{Sp}(\uprho_\alpha)}\lambda_m^{d(k_n,m,j)}\biggr)\\
&\leq\Gamma(\alpha)^{\sum_{m=1}^{\#\operatorname{Sp}(\uprho_\alpha)}\bigl(d(k_{n+1},m,j)-d(k_n,m,j)\bigr)}\prod_{m=1}^{\#\operatorname{Sp}(\uprho_\alpha)}\lambda_m^{d(k_n,m,j)}\\
&=\Gamma(\alpha)^{2^{(k_{n+1}-1)}-2^{(k_n-1)}}\Gamma(\alpha)^{2^{(k_n-1)}\theta^{k_n}_j}
\end{split}
\]
(the inequality follows from the fact that $\Gamma(\alpha)=\max\{\lambda_1,\dotsc,\lambda_{\#\operatorname{Sp}(\uprho_\alpha)}\}$). In other words $(\alpha_{k_n})_{n\in\NN}$ satisfies conditions \eqref{warunek1}--\eqref{warunek4}.

Now using Lemma \ref{kaclemma1} and properties \eqref{warunek1}--\eqref{warunek4} of $(\alpha_{k_n})_{n\in\NN}$ we will arrive at a contradiction. Set $t=0$ if $N$ is even and $t=1$ otherwise. We have
\begin{equation}\label{nierownosc}
\begin{split}
1&\leq\frac{d_1(\alpha_{k_{n+1}})}{d_1(\alpha_{k_n})}\Gamma(\alpha)^{2^{(k_n-1)}-2^{(k_{n+1}-1)}}\\
&=\frac{\sum\limits_{j=1}^{\lfloor{N/2}\rfloor}\biggl(\Gamma(\alpha)^{2^{(k_{n+1}-1)}\theta^{k_{n+1}}_j}+\Gamma(\alpha)^{-2^{(k_{n+1}-1)}\theta^{k_{n+1}}_j}\biggr)+t}
{\sum\limits_{j=1}^{\lfloor{N/2}\rfloor}\biggl(\Gamma(\alpha)^{2^{(k_n-1)}\theta^{k_n}_j}+\Gamma(\alpha)^{-2^{(k_n-1)}\theta^{k_n}_j}\biggr)+t}\Gamma(\alpha)^{2^{(k_n-1)}-2^{(k_{n+1}-1)}}\\
&\leq
\frac{\sum\limits_{j=1}^{\lfloor{N/2}\rfloor}\biggl(\Gamma(\alpha)^{2^{(k_n-1)}\theta^{k_n}_j}+\Gamma(\alpha)^{2^{(k_n-1)}-2^{(k_{n+1}-1)}\bigl(\theta^{k_{n+1}}_j+1\bigr)}\biggr)+t\Gamma(\alpha)^{2^{(k_n-1)}-2^{(k_{n+1}-1)}}}
{\sum\limits_{j=1}^{\lfloor{N/2}\rfloor}\biggl(\Gamma(\alpha)^{2^{(k_n-1)}\theta^{k_n}_j}+\Gamma(\alpha)^{-2^{(k_n-1)}\theta^{k_n}_j}\biggr)+t}.
\end{split}
\end{equation}

Since $(k_n)_{n\in\NN}$ is an increasing sequence, clearly
\begin{equation}\label{tt}
t\Gamma(\alpha)^{2^{(k_n-1)}-2^{(k_{n+1}-1)}}\leq{t},\qqquad{n}\in\NN. 
\end{equation}
Moreover, for each $j\in\bigl\{1,\dotsc,\lfloor N/2\rfloor\bigr\}$ and $n\in\NN$ we have
\[
2^{(k_{n+1}-1)}\bigl(\theta^{k_{n+1}}_j+1\bigr)\geq2^{(k_{n+1}-1)}>2^{k_n}\geq2^{(k_n-1)}\theta^{k_n}_j+2^{(k_n-1)}
\]
and hence
\begin{equation}\label{kk}
2^{(k_n-1)}-2^{(k_{n+1}-1)}\bigl(\theta^{k_{n+1}}_j+1\bigr)<-2^{(k_n-1)}\theta^{k_n}_j.
\end{equation}
Comparing appropriate terms in the numerator and denominator of the right hand side of \eqref{nierownosc} and using \eqref{tt} and \eqref{kk} we find that
\[
\frac{\sum\limits_{j=1}^{\lfloor{N/2}\rfloor}\biggl(\Gamma(\alpha)^{2^{(k_n-1)}\theta^{k_n}_j}+\Gamma(\alpha)^{2^{(k_n-1)}-2^{(k_{n+1}-1)}\bigl(\theta^{k_{n+1}}_j+1\bigr)}\biggr)+t\Gamma(\alpha)^{2^{(k_n-1)}-2^{(k_{n+1}-1)}}}
{\sum\limits_{j=1}^{\lfloor{N/2}\rfloor}\biggl(\Gamma(\alpha)^{2^{(k_n-1)}\theta^{k_n}_j}+\Gamma(\alpha)^{-2^{(k_n-1)}\theta^{k_n}_j}\biggr)+t}<1
\]
which contradicts $\eqref{nierownosc}$ and therefore proves Theorem \ref{mainThm}.

At the end of this section we use Theorem \ref{mainThm} to derive a corollary concerning quantum groups which are not of Kac type.

\begin{corollary}\label{onerep}
Let $\GG$ be a compact quantum group and let $U\in\C(\GG)\tens\B(\cH_{U})$ be a finite dimensional unitary representation such that $\Gamma(U)>1$. Then 
\[
\sup{\bigl\{n_{\beta}\st}\beta\in\Irr{\GG},\:
\beta\cleq{U^{\stp{k_1}}\tp\dotsm\tp U^{\stp{k_n}}},\:n\in\ZZ_+,\:k_1,\dotsc,k_n\in\ZZ\bigr\}=+\infty,
\]
where we have used conventions $U^{-n}=\overline{U}^n$ for $n\in\NN$ and $U^0=\I$.
\end{corollary}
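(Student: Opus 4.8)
The plan is to deduce the corollary from Theorem \ref{mainThm} applied to the compact quantum group $\HH$ ``generated by $U$''. I would first recall its construction. The linear span of the matrix coefficients of all the representations $U^{\stp{k_1}}\tp\dotsm\tp U^{\stp{k_n}}$, with $n\in\ZZ_+$ and $k_1,\dotsc,k_n\in\ZZ$ and the conventions $U^{-m}=\overline{U}^{\stp{m}}$, $U^{0}=\I$ of the statement, is a Hopf $*$-subalgebra $\Pol(\HH)$ of $\Pol(\GG)$: it contains the unit, it is closed under multiplication and comultiplication because such tensor products are again of the same shape, and it is closed under the $*$-operation and the antipode because $U^{\cc}\cong\overline{U}$ and conjugation sends $U^{\stp{k}}$ to $U^{\stp{-k}}$. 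Restricting the Haar state of $\GG$ to $\Pol(\HH)$ gives its Haar state, and completing produces a compact quantum group $\HH$. By the standard description of the ``subgroup generated by a representation'', identifying representations of $\HH$ with representations of $\GG$ one has
\[
\Irr\HH=\bigl\{\beta\in\Irr\GG\st\beta\cleq U^{\stp{k_1}}\tp\dotsm\tp U^{\stp{k_n}},\ n\in\ZZ_+,\ k_1,\dotsc,k_n\in\ZZ\bigr\},
\]
so that the supremum in the corollary is exactly $\bN_\HH$.

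Next I would use that $\uprho_U$ is intrinsic to $U$: the spaces $\operatorname{End}(U)$ and $\Mor(U,U^{\cc\cc})$ do not change when $U$ is viewed as a representation of $\HH$ rather than of $\GG$, so $\Gamma(U)=\|\uprho_U\|$ is the same in both pictures. Hence $\Gamma(U)>1$ forces $\uprho_U\neq\I$, and $\HH$ is not of Kac type. In order to apply Theorem \ref{mainThm} I also need $\HH$ to be infinite, and this follows from Proposition \ref{gammaprop}: for each $k\in\NN$ the representation $U^{\stp{k}}$ of $\HH$ is a direct sum of irreducibles of $\HH$, and by the direct-sum and tensor-product rules for $\Gamma$ at least one of them, say $\beta^{(k)}$, satisfies $\Gamma(\beta^{(k)})=\Gamma(U^{\stp{k}})=\Gamma(U)^{k}$; since $\Gamma(U)>1$ these numbers are pairwise distinct, so the $\beta^{(k)}$ are pairwise inequivalent and $\Irr\HH$ is infinite.

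Finally, Theorem \ref{mainThm} applied to $\HH$ states, contrapositively, that an infinite compact quantum group which is not of Kac type has $\bN=+\infty$; thus $\bN_\HH=+\infty$, which by the first paragraph is exactly the claimed equality. The step I expect to require real care — the rest being bookkeeping on top of Proposition \ref{gammaprop} and Theorem \ref{mainThm} — is the construction and description of $\HH$: one must check that the Hopf $*$-algebra generated by the coefficients of $U$ genuinely defines a compact quantum group whose irreducible representations are precisely the subrepresentations of the listed tensor products. This is what legitimizes feeding $\HH$ into the earlier results, and it is also why the infiniteness of $\HH$ must be verified separately, since Theorem \ref{mainThm}, like the whole paper, is proved under the standing assumption that the underlying quantum group is infinite, whereas the corollary imposes no such restriction on $\GG$.
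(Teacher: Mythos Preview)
Your proof is correct and follows the same strategy as the paper: pass to the compact quantum group $\HH$ generated by $U$, identify $\Irr\HH$ with the set appearing in the statement, and invoke Theorem~\ref{mainThm}. The only cosmetic differences are that the paper first reduces to irreducible $U$ and uses the character criterion $\bh_\HH(\chi_\beta^*\chi_\beta)=\bh_\GG(\chi_\beta^*\chi_\beta)=1$ to show each such $\beta$ remains irreducible over $\HH$, whereas you cite the standard description of $\Irr\HH$ directly and add an explicit check that $\HH$ is infinite --- a point the paper leaves implicit under its standing assumption.
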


\begin{proof}
As any unitary representation decomposes into sum of irreducible ones, it is enough to prove this claim for $U=U^{\alpha}$, where $\alpha\in\Irr{\GG}$. Let $\HH$ be the image of $\GG$ in the representation $U$, i.e.~$\C(\HH)$ is the \cst-algebra generated by $\bigl\{U^\alpha_{i,j}{\st{i,j}\in\{1,\dotsc,n_\alpha\}\bigr\}}$ and $\Delta_\HH=\bigl.\Delta_\GG\bigr|_{\C(\HH)}$ (cf.~\cite[Remarks 3 and 1]{spheres}). It is easily seen that $\bigl.\bh_\GG\bigr|_{\C(\HH)}$ is a bi-invariant state on $\C(\HH)$ and consequently, by uniqueness of Haar measure, we have $\bh_{\HH}=\bigl.\bh_\GG\bigr|_{\C(\HH)}$.

For some $n\in\ZZ_+$, and $k_1,\dotsc,k_n\in\ZZ$ let $\beta\cleq\alpha^{\stp{k_1}}\tp\dotsm\tp\alpha^{\stp{k_n}}$ be a (class of) an irreducible representation of $\GG$. Then $U^{\beta}$ is also irreducible as a representation of $\HH$ since
\[
\bh_\HH({\chi_\beta}^*\chi_\beta)=\bh_\GG({\chi_\beta}^*\chi_\beta)=1
\]
(where $\chi_\beta$ is the character of $U^\beta$, cf.~\cite[Corollary 5.10]{pseudogr}). Since matrix elements of such representations span dense subspace in $\C(\HH)$ we have 
\[
\Irr{\HH}={\bigl\{\beta\in\Irr{\GG}\st}\beta\in\Irr{\GG},\:
\beta\cleq{U^{\stp{k_1}}\tp\dotsm\tp U^{\stp{k_n}}},\:n\in\ZZ_+,\:k_1,\dotsc,k_n\in\ZZ\bigr\}.
\]
Since $\Gamma(\alpha)>1$ (where $\alpha$ is considered as a class of representation of $\HH$) we must have
\[
\sup{\bigl\{n_{\beta}\st}\beta\in\Irr{\GG},\:\beta\cleq{U^{\stp{k_1}}\tp\dotsm\tp U^{\stp{k_n}}},\:n\in\ZZ_+,\:k_1,\dotsc,k_n\in\ZZ\bigr\}=+\infty
\]
due to Theorem \ref{mainThm}.
\end{proof}

\renewcommand{\thesection}{A}
\setcounter{proposition}{0}
\setcounter{equation}{0}

\section*{Appendix: Algebraic characterization of groups with representations of bounded degree}

In this section we note a characterization of the property of $\GG$ having irreducible representations of bounded degree in terms of the comultiplication on $\Pol(\GG)$. The reasoning is based on the fact that the algebra of $n\times{n}$ matrices (over a field of characteristic $0$) satisfies a polynomial identity of degree $2n$ and not lower (cf.~\cite{PIalg}).

\begin{proposition}\label{rn}
Let $\GG$ be a compact quantum group. Then $\bN_\GG<+\infty$ if and only if there exists $r\geq{2}$ such that
\begin{equation}\label{IK}
\sum_{\pi\in\mathsf{S}_r}\operatorname{sgn}(\pi)x_{\pi(1)}{\dotsm}x_{\pi(r)}=0,\qqquad{x_1,\dotsc,x_r}\in\operatorname{c}_{00}(\hh{\GG}).
\end{equation}
\end{proposition}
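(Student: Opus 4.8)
The plan is to reduce the statement to the classical Amitsur--Levitzki theorem, applied separately to each matrix block of the algebraic direct sum $\operatorname{c}_{00}(\hh{\GG})=\bigoplus_{\alpha\in\Irr{\GG}}M_{n_\alpha}(\CC)$. Throughout I write $s_r(x_1,\dotsc,x_r)=\sum_{\pi\in\mathsf{S}_r}\operatorname{sgn}(\pi)x_{\pi(1)}\dotsm x_{\pi(r)}$ for the standard polynomial of degree $r$, so that \eqref{IK} asserts precisely that $\operatorname{c}_{00}(\hh{\GG})$ satisfies the polynomial identity $s_r\equiv0$. I would quote from \cite{PIalg} the following two facts: (a) $M_n(\CC)$ satisfies $s_{2n}\equiv0$, and conversely satisfies no nonzero polynomial identity --- in particular no standard identity $s_m$ --- of degree $m<2n$; and (b) whenever $n\le N$, the algebra $M_n(\CC)$ embeds (say as the top-left corner) as a subalgebra of $M_N(\CC)$ and therefore satisfies every polynomial identity satisfied by $M_N(\CC)$.

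For the forward implication I would assume $\bN_{\GG}=N<+\infty$ and take $r=2N$ (which is $\ge2$ since $N\ge1$). Fixing $\alpha\in\Irr{\GG}$ and letting $\pi_\alpha\colon\operatorname{c}_{00}(\hh{\GG})\to\B(\cH_\alpha)\cong M_{n_\alpha}(\CC)$ be the canonical surjective $*$-homomorphism (as in Section~\ref{spectral}), for any $x_1,\dotsc,x_r\in\operatorname{c}_{00}(\hh{\GG})$ one has $\pi_\alpha\bigl(s_r(x_1,\dotsc,x_r)\bigr)=s_r\bigl(\pi_\alpha(x_1),\dotsc,\pi_\alpha(x_r)\bigr)$, which vanishes because $n_\alpha\le N$ forces $M_{n_\alpha}(\CC)$ to satisfy $s_{2N}=s_r\equiv0$ by (a) and (b). Since the only element of $\operatorname{c}_{00}(\hh{\GG})$ annihilated by every $\pi_\alpha$ is $0$, this yields $s_r(x_1,\dotsc,x_r)=0$, i.e.\ \eqref{IK}.

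For the converse I would assume \eqref{IK} holds for some $r\ge2$ and fix $\alpha\in\Irr{\GG}$. Given arbitrary $y_1,\dotsc,y_r\in M_{n_\alpha}(\CC)$, I would pick preimages $x_i\in\operatorname{c}_{00}(\hh{\GG})$ with $\pi_\alpha(x_i)=y_i$ (possible by surjectivity of $\pi_\alpha$), and then $s_r(y_1,\dotsc,y_r)=\pi_\alpha\bigl(s_r(x_1,\dotsc,x_r)\bigr)=0$ by \eqref{IK}; hence $M_{n_\alpha}(\CC)$ satisfies $s_r\equiv0$. By the sharpness half of (a) this is only possible if $r\ge2n_\alpha$, i.e.\ $n_\alpha\le r/2$, and since $\alpha$ was arbitrary we conclude $\bN_{\GG}\le r/2<+\infty$.

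I do not anticipate any genuine obstacle here; the argument is essentially bookkeeping around the direct-sum decomposition of $\operatorname{c}_{00}(\hh{\GG})$, using joint injectivity of the family $(\pi_\alpha)_{\alpha\in\Irr{\GG}}$ for one direction and surjectivity of each $\pi_\alpha$ for the other. The only substantive input is the Amitsur--Levitzki theorem and its sharpness, which I would simply cite.
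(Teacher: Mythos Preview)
Your proof is correct and follows essentially the same route as the paper's: both reduce the question to the Amitsur--Levitzki theorem (and its sharpness) applied blockwise to the algebraic direct sum $\operatorname{c}_{00}(\hh{\GG})=\bigoplus_{\alpha}M_{n_\alpha}(\CC)$. The paper's argument is terser, but your explicit use of the projections $\pi_\alpha$ (joint injectivity for one direction, individual surjectivity for the other) just unpacks the same idea.
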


\begin{proof}
By the Amitsur-Levitzki theorem \cite[Section 4]{PIalg} for any $n\geq{2}$ we have
\[
\sum_{\pi\in\mathsf{S}_{2n}}\operatorname{sgn}(\pi)m_{\pi(1)}{\dotsm}m_{\pi(2n)}=0,\qqquad{m_1,\dotsc,m_{2n}}\in\mathsf{M}_n(\CC).
\]
Moreover $\mathsf{M}_n(\CC)$ does not have a proper polynomial identity of degree strictly smaller than $2n$ (\cite[Section 3, Lemma 2]{PIalg}). Since $\operatorname{c}_{00}(\hh{\GG})$ is the algebraic direct sum of matrix algebras of sizes equal to the dimensions of irreducible representations of $\GG$, we see that $\bN_\GG$ is finite if and only if \eqref{IK} is satisfied for some $r$ (namely $r=2\bN_\GG$ or larger).
\end{proof}

Condition \eqref{IK} form Proposition \ref{rn} can be rewritten in the following way: for $\pi\in\mathsf{S}_r$ let $\widetilde{\pi}$ be the operator on $\operatorname{c}_{00}(\hh{\GG})^{\tens{r}}$ permuting the tensor factors and let $\boldsymbol{\mu}$ be the multiplication map $\operatorname{c}_{00}(\hh{\GG})\tens\operatorname{c}_{00}(\hh{\GG})\to\operatorname{c}_{00}(\hh{\GG})$. Further let $\bigl(\boldsymbol{\mu}^{(k)}\bigr)_{k\in\NN}$ be the obvious extensions of multiplication to higher tensor powers of $\operatorname{c}_{00}(\hh{\GG})$:
\[
\boldsymbol{\mu}^{(k)}:\operatorname{c}_{00}(\hh{\GG})^{\tens(k+1)}\longrightarrow\operatorname{c}_{00}(\hh{\GG}),\qqquad{k}\in\NN.
\]
Then \eqref{IK} means simply
\[
\sum_{\pi\in\mathsf{S}_r}\operatorname{sgn}(\pi)\cdot\boldsymbol{\mu}^{(r-1)}\comp\widetilde{\pi}=0.
\]

Now recall that $\Pol(\GG)$ is the (multiplier) Hopf algebra dual to $\operatorname{c}_{00}(\hh{\GG})$ (\cite{vanDaele}). In particular, for each $k\geq{2}$ the map $\boldsymbol{\mu}^{(k)}$ is dual to 
\[
\Delta_\GG^{(k)}:\operatorname{c}_{00}(\hh{\GG})\longrightarrow\operatorname{M}\bigl(\operatorname{c}_{00}(\hh{\GG})^{\tens(k+1)}\bigr),
\]
where $\operatorname{M}(\,\cdot\,)$ denotes the multiplier functor (\cite{mha,vanDaele}). Thus the condition of having irreducible representations of bounded degree can be expressed in terms of the coalgebra structure of $\Pol(\GG)$:

\begin{corollary}
Let $\GG$ be a compact quantum group. Then $\bN_\GG<+\infty$ if and only if there exists $r\geq{2}$ such that
\begin{equation}\label{Pr}
\sum_{\pi\in\mathsf{S}_r}\operatorname{sgn}(\pi)\cdot\widetilde{\pi}\comp\Delta_\GG^{(r-1)}=0.
\end{equation}
\end{corollary}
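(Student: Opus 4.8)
The plan is to deduce the corollary from Proposition \ref{rn} by a purely formal duality argument, so the mathematical content is already contained in the Amitsur--Levitzki theorem as applied in that proposition. First I would recall the setup: $\Pol(\GG)$ and $\operatorname{c}_{00}(\hh{\GG})$ form a pair of (multiplier) Hopf algebras in non-degenerate duality in the sense of \cite{vanDaele}, so that there is a pairing $\langle\,\cdot\,,\,\cdot\,\rangle:\Pol(\GG)\times\operatorname{c}_{00}(\hh{\GG})\to\CC$ under which the product $\boldsymbol\mu$ on $\operatorname{c}_{00}(\hh{\GG})$ is adjoint to the comultiplication $\Delta_\GG$ on $\Pol(\GG)$, and by induction the $k$-fold product $\boldsymbol\mu^{(k)}$ is adjoint to the $k$-fold coproduct $\Delta_\GG^{(k)}$. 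The permutation operator $\widetilde\pi$ on $\operatorname{c}_{00}(\hh{\GG})^{\tens r}$ is likewise adjoint to the permutation operator $\widetilde\pi$ on $\Pol(\GG)^{\tens r}$ (here one uses that the pairing on tensor powers is the tensor power of the pairing).

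The key step is then the following chain of equivalences. The condition \eqref{Pr}, namely $\sum_{\pi\in\mathsf{S}_r}\operatorname{sgn}(\pi)\cdot\widetilde\pi\comp\Delta_\GG^{(r-1)}=0$ as a map $\operatorname{c}_{00}(\hh{\GG})\to\operatorname{M}(\operatorname{c}_{00}(\hh{\GG})^{\tens r})$ (or more precisely as a map into $\Pol(\GG)^{\tens r}$ after using non-degeneracy), holds if and only if for every $a\in\Pol(\GG)$ and every $x_1,\dotsc,x_r\in\operatorname{c}_{00}(\hh{\GG})$ one has
\[
\Bigl\langle\sum_{\pi\in\mathsf{S}_r}\operatorname{sgn}(\pi)\,\widetilde\pi\bigl(\Delta_\GG^{(r-1)}(a)\bigr),\,x_1\tens\dotsm\tens x_r\Bigr\rangle=0.
\]
Moving the permutation onto the other leg of the pairing, this equals $\sum_{\pi\in\mathsf{S}_r}\operatorname{sgn}(\pi)\langle\Delta_\GG^{(r-1)}(a),\,x_{\pi^{-1}(1)}\tens\dotsm\tens x_{\pi^{-1}(r)}\rangle$, and by the duality between $\Delta_\GG^{(r-1)}$ and $\boldsymbol\mu^{(r-1)}$ this in turn equals $\langle a,\,\sum_{\pi\in\mathsf{S}_r}\operatorname{sgn}(\pi)\,x_{\pi^{-1}(1)}\dotsm x_{\pi^{-1}(r)}\rangle=\langle a,\,\sum_{\pi\in\mathsf{S}_r}\operatorname{sgn}(\pi)\,x_{\pi(1)}\dotsm x_{\pi(r)}\rangle$ (the substitution $\pi\mapsto\pi^{-1}$ preserves the sign and merely reindexes the sum). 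Since this must hold for all $a\in\Pol(\GG)$ and the pairing is non-degenerate on the $\operatorname{c}_{00}(\hh{\GG})$ side, \eqref{Pr} is equivalent to $\sum_{\pi\in\mathsf{S}_r}\operatorname{sgn}(\pi)\,x_{\pi(1)}\dotsm x_{\pi(r)}=0$ for all $x_1,\dotsc,x_r\in\operatorname{c}_{00}(\hh{\GG})$, which is exactly \eqref{IK}. Proposition \ref{rn} then finishes the argument.

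The main obstacle I anticipate is purely bookkeeping rather than conceptual: one has to be careful about whether $\Delta_\GG^{(r-1)}$ lands in the algebraic tensor power $\Pol(\GG)^{\tens r}$ (it does, since $\Pol(\GG)$ is a genuine Hopf algebra, even though on the predual side one needs multipliers), and about the precise convention for how $\widetilde\pi$ acts and how the tensor-power pairing interacts with it, so that the sign and the index substitution $\pi\mapsto\pi^{-1}$ come out correctly. One should also remark explicitly that both formulations are genuinely about the coalgebra structure of $\Pol(\GG)$ alone --- only $\Delta_\GG$ and the permutations enter --- which is the point of stating the corollary. Once the pairing conventions are pinned down the verification is a one-line adjointness computation, so no serious difficulty remains.
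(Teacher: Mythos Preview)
Your proposal is correct and follows exactly the approach the paper intends: the corollary is stated immediately after a paragraph explaining that $\boldsymbol{\mu}^{(k)}$ and $\Delta_\GG^{(k)}$ are dual under the pairing between $\operatorname{c}_{00}(\hh{\GG})$ and $\Pol(\GG)$, and the corollary is meant to follow from Proposition~\ref{rn} by precisely the adjointness computation you spell out. Your version simply makes explicit the one-line duality that the paper leaves to the reader.
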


\begin{remark}
Let us note that it can be shown that for a \emph{classical} group $\GG=G$ condition \eqref{Pr} is equivalent to condition $P_r$ considered by Kaplansky (\cite[Section 3]{kaplansky}), which for connected $G$ is further equivalent to commutativity of the group.
\end{remark}

\subsection*{Acknowledgement}
Research presented in this paper was partially supported by the NCN (National Science Centre, Poland) grant no.~2015/17/B/ST1/00085.

\bibliography{low14}{}
\bibliographystyle{plain}

% published articles' records taken from MathSciNet
% arXiv preprints' records taken from NASA ADS (http://adsabs.harvard.edu/) through arXiv 

\end{document}